\titleformat{\subsection}[runin]
{\bfseries} {\thesubsection{.}}{0.15cm}{}[.]
\titleformat{\subsubsection}[runin]
{\em}{\thesubsubsection{.}}{0.15cm}{}[.]
\newtheorem{theorem}{Theorem}[section]
\newtheorem{proposition}[theorem]{Proposition}
\newtheorem{lemma}[theorem]{Lemma}
\newtheorem{corollary}[theorem]{Corollary}
\theoremstyle{definition}
\newtheorem{definition}[theorem]{Definition}
\newtheorem{remark}[theorem]{Remark}
\newtheorem{problem}[theorem]{Problem}
\numberwithin{equation}{section}
\numberwithin{figure}{section}
\newcommand\Ccal{\mathcal{C}}
\newcommand\Ocal{\mathcal{O}}
\newcommand\Ascr{\mathscr{A}}
\newcommand\Cscr{\mathscr{C}}
\newcommand\Escr{\mathscr{E}}
\newcommand\Lscr{\mathscr{L}}
\newcommand\Oscr{\mathscr{O}}
\newcommand\Pscr{\mathscr{P}}
\newcommand\B{\mathbb{B}}
\newcommand\C{\mathbb{C}}
\newcommand\D{\overline{\mathbb D}}
\newcommand\CP{\mathbb{CP}}
\renewcommand\D{\mathbb D}
\renewcommand\H{\mathbb{H}}
\newcommand\N{\mathbb{N}}
\renewcommand\P{\mathbb{P}}
\newcommand\R{\mathbb{R}}
\renewcommand\S{\mathbb{S}}
\newcommand\Z{\mathbb{Z}}
\newcommand\igot{\mathfrak{i}}
\renewcommand\igot{\mathfrak{i}}
\renewcommand\imath{\igot}
\newcommand\hra{\hookrightarrow}
\newcommand\lra{\longrightarrow}
\newcommand\longhookrightarrow{\ensuremath{\lhook\joinrel\relbar\joinrel\rightarrow}}
\newcommand\wt{\widetilde}
\newcommand\di{\partial}
\newcommand\dibar{\overline\partial}
\newcommand\dist{\mathrm{dist}}
\renewcommand\span{\mathrm{span}}
\newcommand\cd{\overline{\D}}
\newcommand\Id{\mathrm{Id}}
\def\dist{\mathrm{dist}}
\def\span{\mathrm{span}}
\newcommand{\bea}{\begin{eqnarray*}}
\newcommand{\eea}{\end{eqnarray*}}
\numberwithin{equation}{section}
\begin{document}

\fancyhead[LO]{Mergelyan approximation theorem for holomorphic Legendrian curves}
\fancyhead[RE]{F.\ Forstneri{\v c}} 
\fancyhead[RO,LE]{\thepage}

\thispagestyle{empty}

\vspace*{1cm}
\begin{center}
{\bf\LARGE Mergelyan approximation theorem for holomorphic \\ Legendrian curves}

\vspace*{0.5cm}

{\large\bf  Franc Forstneri{\v c}} 
\end{center}

\vspace*{1cm}

\begin{quote}
{\small
\noindent {\bf Abstract}\hspace*{0.1cm}
In this paper, we prove a Mergelyan-type approximation theorem for immersed holomorphic Legendrian curves
in an arbitrary complex contact manifold $(X,\xi)$. Explicitly, we show that if $S$ is a compact admissible
set  in a Riemann surface $M$ and $f:S\to X$ is a $\xi$-Legendrian immersion of class $\Cscr^{r+2}(S,X)$ 
for some $r\ge 2$ which is holomorphic in the interior of $S$, then $f$ can be approximated 
in the $\Cscr^r(S,X)$ topology by holomorphic Legendrian embeddings from open neighbourhoods 
of $S$ into $X$. This has numerous applications, some of which are indicated in the paper.
In particular, by using Bryant's correspondence for the Penrose twistor map $\CP^3\to S^4$
we show that a Mergelyan approximation theorem and the Calabi-Yau property
hold for conformal superminimal surfaces in the $4$-sphere $S^4$.

\vspace*{0.2cm}

\noindent{\bf Keywords}\hspace*{0.1cm}  complex contact manifold, Legendrian curve, Mergelyan theorem,
superminimal surface

\vspace*{0.1cm}

\noindent{\bf MSC (2020):}\hspace*{0.1cm}} 
Primary 53D35, 32E30, 34D10. Secondary 37J55, 53A10.
%
%
%
\end{quote}

%
%
%
%
\section{Introduction}\label{sec:intro}

A {\em complex contact manifold} is a pair $(X,\xi)$, where $X$ is a complex manifold of 
necessarily odd dimension $2n+1\ge 3$ and $\xi$ is a {\em complex contact subbundle} of 
the holomorphic tangent bundle $TX$, that is, a maximally nonintegrable
holomorphic hyperplane subbundle  of $TX$.  More precisely, the {\em O'Neill tensor} 
$\xi\times \xi \to TX/\xi=L$, $(v,w)\mapsto [v,w] \mod \xi$, is nondegenerate. 
Consider the short exact sequence
\begin{equation}\label{eq:eta}
	0 \lra \xi \longhookrightarrow TX \stackrel{\eta}{\lra} L \lra 0,
\end{equation}
where $\eta$  is a holomorphic $1$-form on $X$ with values in the 
line bundle $L=TX/\xi$ realising the quotient projection, so $\xi =\ker\eta$. 
The contact condition is equivalent to $\eta\wedge (d\eta)^n$ having no zeros on $X$.
A complex contact structure is locally in a neighbourhood of any point of $X$ 
equivalent to the standard contact structure on $\C^{2n+1}$ given by the $1$-form
\begin{equation}\label{eq:standard}
	\eta_{\mathrm{std}} = dz+\sum_{j=1}^n x_j dy_j.
\end{equation}
See Darboux \cite{Darboux1882CRAS}, Moser \cite{Moser1965TAMS}, and 
Geiges  \cite[p.\ 67]{Geiges2008} for the real case, 
and Alarc\'on et al. \cite[Theorem A.2]{AlarconForstnericLopez2017CM} for the holomorphic case. 
More complete introductions to complex contact manifolds are available in the papers
by LeBrun \cite{LeBrun1995IJM}, LeBrun and Salamon \cite{LeBrunSalamon1994IM},
Beauville \cite{Beauville2011}, and Alarc\'on and Forstneri\v c \cite{AlarconForstneric2019IMRN}.

A map $f: M\to X$ of class $\Cscr^1$ from a real or complex manifold $M$ is said to be 
{\em isotropic}, or an {\em integral submanifold} of the contact structure $\xi$ on $X$, if 
\begin{equation}\label{eq:isotropic}
	df_x (T_x M)\subset \xi_{f(x)}\ \ \text{holds for all}\ \ x\in M.
\end{equation}
Equivalently, $f^*\eta=0$ for any $1$-form $\eta$ on $X$ with $\ker\eta=\xi$.
If $f$ is an immersion at a generic point of $M$, then $\dim_\R M\le 2n$ where $\dim_\C X=2n+1$. 
Isotropic submanifolds of maximal dimension $2n$ are necessarily complex 
submanifolds of $X$ (see \cite[Lemma 5.1]{AlarconForstneric2019IMRN});
they are called {\em Legendrian}. In this paper, isotropic holomorphic curves 
from Riemann surfaces are called {\em holomorphic Legendrian curves}
irrespectively of the dimension of $X$. 

We shall be considering Legendrian immersions from sets of the following type.
 
%
%
\begin{definition}[Admissible sets] \label{def:admissible}
Let $M$ be a connected Riemann surface. A proper compact subset $S\subsetneq M$ is {\em admissible} 
if  it is of the form $S=K\cup E$, where $K$ is a union of finitely many pairwise disjoint compact domains 
in $M$ with piecewise smooth boundaries and $E = \overline{S \setminus K}$ is a union 
of finitely many pairwise disjoint smooth Jordan arcs and closed Jordan curves 
meeting $K$ only in their endpoints (if at all) and such that their intersections with the boundary
$bK$ of $K$ are transverse.
\end{definition} 

An admissible set has a well-defined tangent space at every point $x\in S$
which equals $T_xM$ when $x\in K$, and equals the tangent space of the curve 
$E=\overline{S\setminus K}$ at any point $x\in S\setminus K$. Given a complex manifold $X$, 
we denote by $\Ascr^r(S,X)$ the space of maps $S\to X$ of class $\Cscr^r$ which are holomorphic
in the interior $\mathring S$. Every $f\in \Ascr^r(S)=\Ascr^r(S,\C)$ can be approximated in $\Cscr^r(S)$ 
by meromorphic functions on $M$ with poles off $S$, and every map $S\to X$ of class $\Ascr^r(S,X)$ 
into an arbitrary complex manifold $X$ can be approximated in the $\Cscr^r(S,X)$ topology by 
holomorphic maps $U\to X$ from open neighbourhoods $U\subset M$ of $S$ 
(see \cite[Theorem 16 and Corollary 9]{FornaessForstnericWold2018}).

A map $f\in \Ascr^1(S,X)$ from an admissible set $S$ into a complex contact manifold $(X,\xi)$ is 
said to be {\em Legendrian} if condition \eqref{eq:isotropic} holds with $M$ replaced by $S$.

Our  main result is the following Mergelyan-type approximation theorem
for holomorphic Legendrian immersions.

%
%
\begin{theorem}
\label{th:Mergelyan}
Let $(X,\xi)$ be a complex contact manifold, and let $S$ be an admissible set in a Riemann surface $M$.
Then, every Legendrian immersion $f:S\to X$ of class $\Ascr^{r+2}(S,X)$ for $r\ge 2$ can be approximated 
in the $\Cscr^r(S,X)$ topology by holomorphic Legendrian immersions $\tilde f:U\to X$
from open neighbourhoods of $S$ in $M$. Furthermore, $\tilde f$ can be chosen to agree with $f$ 
on any given finite subset $A$ of $S$ (to any given finite order at the points of $A\cap \mathring S$),
and it can be chosen an embedding provided that  $f$ is injective on $A$. 
\end{theorem}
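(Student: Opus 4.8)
The plan is to reduce the global approximation problem to a local one via a Mergelyan-type patching scheme, using the fact (already recorded in the excerpt) that maps in $\Ascr^r(S,X)$ into an arbitrary complex manifold can be approximated by holomorphic maps from neighbourhoods of $S$, and that functions in $\Ascr^r(S)$ can be approximated by meromorphic functions on $M$ with poles off $S$. The key difficulty is that a naive application of ordinary Mergelyan approximation to the component functions of $f$ in a Darboux chart will destroy the Legendrian condition $f^*\eta=0$; so the scheme must be performed \emph{inside the class of Legendrian curves}. The standard device for this is a \emph{spray of Legendrian curves} with dominating derivative at the reference curve, constructed using a suitable ``period map'' and flows of contact vector fields (or explicitly, in Darboux coordinates, by modifying the $y_j$-components freely and recovering $x_j=-dz/\sum\ldots$ and one of the coordinates by integration, so that the Legendrian relation is built into the parametrisation). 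One then runs the classical Mergelyan induction: exhaust $S$ by a sequence of admissible sets obtained by attaching small cells or bumps, at each step approximating the current Legendrian curve on the slightly larger piece, controlling the $\Cscr^r$-error geometrically so that the limit exists and is holomorphic on a neighbourhood of $S$.

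First I would treat the two elementary building blocks. (i) \emph{Extending across a smooth arc $E$}: by a theorem on approximation of curves in the arc part, together with the local Darboux normal form \eqref{eq:standard}, near each arc component one writes the Legendrian curve in terms of a free function $y=(y_1,\dots,y_n)$ of one complex variable and recovers the remaining components from $x_j = -\partial z/\partial y_j$-type relations by quadrature; approximating the free data by holomorphic functions on a neighbourhood, one obtains a nearby \emph{holomorphic} Legendrian curve there, with prescribed jets at points of $A$. (ii) \emph{Enlarging a domain by a small bump (a ``noncritical'' or ``critical'' attachment)}: on the union $K\cup B$ with $B$ a convex bump contained in a single Darboux chart, one uses a spray $f_\zeta$, $\zeta$ in a ball of $\C^N$, of Legendrian curves with $f_0=f$ and with $\zeta\mapsto f_\zeta$ \emph{dominating} (surjective differential in the fibre directions) along $bB\cap K$; then ordinary Mergelyan approximation of the \emph{spray parameter} — not of $f$ itself — combined with an implicit-function / fixed-point argument on the relevant period/matching conditions produces a single Legendrian curve on $K\cup B$ that is $\Cscr^r$-close to $f$ on $S$ and holomorphic near $K\cup B$. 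Here the relevant period conditions are those guaranteeing that the locally defined $z$-component (an antiderivative along $E$ or across the bump) is well defined; since $S$ is homotopy-equivalent to a finite wedge of circles the period map has finite-dimensional target and the spray is designed to be submersive onto it.

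The induction then proceeds as in classical Mergelyan theory: by Definition \ref{def:admissible}, $S=K\cup E$ with $K$ a finite union of bordered domains and $E$ a finite union of arcs and curves attached transversally; choose a smooth exhaustion of a neighbourhood of $S$ by admissible sets $S=S_0\subset S_1\subset S_2\subset\cdots$, where each $S_{k+1}$ is obtained from $S_k$ either by attaching a small bump in a Darboux chart (handling both the ``thickening'' of $K$ and the absorption of pieces of $E$) or by a change of topology handled by a single critical bump. At step $k$ apply building block (ii) (and (i) for the initial absorption of the arcs $E$) to approximate the holomorphic Legendrian curve $f_k$, defined near $S_k$, by a holomorphic Legendrian curve $f_{k+1}$ defined near $S_{k+1}$, with $\|f_{k+1}-f_k\|_{\Cscr^r(S_k)}<\epsilon_k$ and with the prescribed jets along $A$ maintained at each stage (the interpolation is inherited from the spray construction, since one can require the spray to be constant in $\zeta$ to the prescribed order at the points of $A$). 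Choosing $\sum_k\epsilon_k$ small and summable, the sequence $f_k$ converges in $\Cscr^r(S,X)$ to a limit $\tilde f$ which is holomorphic and Legendrian on an open neighbourhood of $S$ and as $\Cscr^r$-close to $f$ as desired, agreeing with $f$ to the prescribed order on $A$. Finally, genericity of immersions/embeddings among holomorphic maps from a neighbourhood of $S$ (a one-dimensional Stein-type source) into $X$, together with a general-position argument preserving the Legendrian condition via a further small spray perturbation, upgrades $\tilde f$ to an embedding when $f|_A$ is injective.

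The main obstacle I expect is the construction in building block (ii) of a \emph{Legendrian} spray with the required domination property that also respects the period conditions and the jet constraints simultaneously — i.e., producing enough directions of deformation within the rigid class of Legendrian curves to control an ordinary Mergelyan patching. The regularity bookkeeping (why $r+2$ on the hypothesis and $r$ on the conclusion: two derivatives are lost in the quadrature recovering the remaining contact components from the free data, or in solving the $\dibar$-problem with the required estimates up to the boundary of the admissible set) is the second, more technical, point that must be handled carefully.
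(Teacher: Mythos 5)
Your proposal takes the inductive gluing route (bump attachments plus dominating sprays of Legendrian curves), which is essentially the method of Alarc\'on--Forstneri\v c--L\'opez for the standard contact structure on $\C^{2n+1}$. This is not the route the paper takes, and the paper explicitly states that it does not see how to make this ``comparably simple approach'' work in a general complex contact manifold. The two points where your scheme has genuine gaps are the following. First, the core Mergelyan step is presupposed rather than proved: your induction starts from $S_0=S$ and at each stage assumes that $f_k$ is \emph{already holomorphic on a neighbourhood} of $S_k$, but the given datum $f$ is only of class $\Ascr^{r+2}(S,X)$ (holomorphic in $\mathring S$, merely $\Cscr^{r+2}$ up to $bK$ and along the arcs). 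Attaching bumps to $K$ does not upgrade the boundary regularity of $f$ on $K$ itself, and this upgrade --- done while preserving the Legendrian constraint and killing the periods over all of $H_1(S,\Z)\cong\Z^l$ simultaneously --- is precisely the content of the theorem. Second, your building block (ii) requires a period-dominating spray of \emph{Legendrian} deformations of $f$ over $K$ (not just over the bump $B$), and $f(K)$ does not lie in a single Darboux chart; producing such a spray for a merely $\Ascr$-class curve in an arbitrary $(X,\xi)$ is exactly the obstacle you flag at the end but do not resolve. Your explanation of the loss of two derivatives (quadrature or a $\dibar$-problem) is also not where the loss actually occurs.

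For contrast, the paper avoids gluing altogether: it extends $f$ to an immersion $F:S\times\B^{2n}\to X$ of class $\Ascr^{r+2}$ (Lemma \ref{lem:normalbundle}), pulls back the contact form to $\beta=F^*\eta$, and brings $\beta$ into a partial normal form $dw-y\,dz+\cdots$ along $S\times\{0\}^{2n}$ by fibrewise linear changes of coordinates (Lemma \ref{lem:NF}); the two derivatives are lost here, in the differentials of the transition matrices. It then chooses a Runge homology basis $C_1,\dots,C_l$ of $S$, a holomorphic spray $y(p,t)=\sum_i t_i\xi_i(p)$ with $\int_{C_i}\xi_j\,dz=\delta_{i,j}$, and observes that the period map of the resulting family of determined ODEs for $w$ is $t+O(|t|^2)$. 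After approximating $F$ by a genuinely holomorphic immersion $G$ on a neighbourhood of $S\times\{0\}^{2n}$ (ordinary Mergelyan approximation in the tube), the perturbed period map still has degree one on a small polydisc, so by degree theory some parameter $t^0$ yields a single-valued holomorphic solution; its $G$-image is the desired Legendrian approximant, and embeddedness follows from the general position theorem of Alarc\'on--Forstneri\v c. In short, the Legendrian constraint is handled as an underdetermined holomorphic Pfaffian equation whose periods are killed by a topological degree argument, not by a Cousin-type gluing within the class of Legendrian curves.
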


%
%
The loss of two derivatives is likely due to the method; there is 
on loss in the standard contact structure on $\C^{2n+1}$ (cf.\ \cite{AlarconForstnericLopez2017CM}). 
Theorem \ref{th:Mergelyan} is proved in Section \ref{sec:proof}; here is an outline. 
We have $\xi=\ker\eta$ where $\eta$ is a holomorphic contact form (see \eqref{eq:eta}).
If $B$ is a domain in $\C^n$, we denote by $\Ascr^r(S\times B, X)$ the space of  
maps $F:S\times B\to X$ of class $\Cscr^r$ which are holomorphic on $\mathring S\times B$ 
and such that $F(x,\cdotp):B\to X$ is holomorphic for every $x\in S$. 
The immersion $f:S\to X$ extends to an immersion $F:S\times \B^{2n}\to X$ 
of class $\Ascr^{r+2}(S\times \B^{2n}, X)$, where $\B^{k}$ denotes the open unit ball in $\C^{k}$ 
(see Lemma \ref{lem:normalbundle}). This gives a contact form $\beta=F^*\eta$ on $S\times \B^{2n}$ 
of class $\Ascr^{r+2}(S\times \B^{2n})$. Next, we obtain a partial normal form of 
$\beta$ along the Legendrian curve $S\times \{0\}^{2n}$
in the spirit of \cite[Theorem 1.1]{AlarconForstneric2019IMRN}; see Lemma \ref{lem:NF}. 
This amounts to precomposing $F$ by a fibre-preserving map on $S\times \B^{2n}$
of class $\Ascr^r$ which is $\C$-linear on the fibres, 
so the new immersion $F$ is of class $\Ascr^r(S\times \B^{2n})$ (here we lose two derivative). 
From this point on, the  proof amounts to a careful study of the period map 
of solutions of the differential equation for $\beta$-Legendrian curves along closed real curves in $S$ 
forming a homology basis. (This period map is also called the {\em monodromy map} when considered
on the whole space of solutions, or the {\em Poincar\'e first return map} 
when considered on a neighbourhood of a given single-valued solution.) 
The differential equation for Legendrian curves is underdetermined, and we obtain a family 
of determined differential equations by choosing all but one independent variables as functions of the remaining 
independent variable and of some additional parameters.   
We show that a suitable choice of such a family admits non-single-valued solutions with 
arbitrarily given small periods around each closed curve in a homology basis of $S$.
Next, we approximate $F$ in the $\Cscr^r(S\times \B^{2n}, X)$ topology by a holomorphic immersion 
$G:U\times \rho\B^{2n}\to X$, where $U\subset M$ is a neighbourhood of $S$ and $0<\rho<1$,
and consider the holomorphic contact form $\gamma=G^*\eta$ on $U\times \rho\B^{2n}$. 
If the approximation is close enough, the degree theory shows that there is a parameter value for 
which the associated differential equation in the perturbed family has a solution with vanishing periods 
on a neighbourhood of $S$. The $G$-image of this single-valued solution is an immersed holomorphic 
$\eta$-Legendrian curve $\tilde f:U\to X$ whose restriction to $S$ approximates the given Legendrian curve 
$f:S\to X$. Finally, by \cite[Theorem 1.2]{AlarconForstneric2019IMRN} we can approximate $\tilde f$ by 
embedded holomorphic Legendrian curves.

%
%
The restriction to admissible sets in Theorem \ref{th:Mergelyan}
is rather natural as one cannot reasonably talk about solutions of differential
equations on topologically complicated sets, such as those in the  approximation theorems 
for functions due to Mergelyan  \cite{Mergelyan1951} and Vitushkin \cite{Vitushkin1966}.
Our proof strongly uses the fact that $S$ 
has finitely generated first homology group $H_1(S,\Z)\cong \Z^l$, with a homology basis 
consisting of piecewise $\Cscr^1$ curves. Some generalisations are possible 
in the spirit of Carleman-type approximation; see Castro-Infantes and Chenoweth 
\cite{Castro-InfantesChenoweth2020} for results of this type for conformal minimal surfaces
in $\R^n$, as well as the survey \cite{FornaessForstnericWold2018}. 
We shall not pursue them here.

%
%
\begin{remark}\label{rem:Hilbert21}
The problem of finding (solutions of) ordinary holomorphic differential equations with prescribed 
periods or holonomy is related to Hilbert's 21st problem, asking about the existence
of systems of ordinary linear holomorphic differential equations whose coefficients are rational 
functions with simple poles at a given finite set of points in the extended complex plane 
$\C\cup\{\infty\}=\CP^1$ (Fuchsian systems) and with a given monodromy. 
After major contributions by Hilbert (1905) and Plemelj (1908), the problem was solved 
in the negative by Bolibruch in 1989; see \cite{AnosovBolibruch1994}. 
Plemelj \cite{Plemelj1908} gave an affirmative answer to Hilbert's 21st problem 
in the bigger class of systems whose coefficients have simple poles and the solutions grow 
at most polynomially at every pole. Here we deal with nonlinear Pfaffian holomorphic 
differential equations which are underdetermined and completely nonintegrable. 
This allows us to find a parametric family of determined equations which have solutions with 
arbitrarily given small periods on curves in a homology basis.
\qed\end{remark}

Admittedly, our proof is substantially more involved than the proof of the corresponding result in the seemingly 
only known special case when $X$ is a Euclidean space $\C^{2n+1}$ endowed with the standard complex 
contact structure \eqref{eq:standard}; see \cite[Theorems 1.1 and 5.1]{AlarconForstnericLopez2017CM}. 
Assuming that the admissible set $S\subset M$ is $\Ocal(M)$-convex (equivalently, $S$ has no holes in $M$), 
those results show that one can approximate a Legendrian map $S\to\C^{2n+1}$ of class 
$\Ascr^r(S,\C^{2n+1})$ for any $r\in\N$ in the $\Cscr^r(S,\C^{2n+1})$ topology 
by proper holomorphic Legendrian embeddings $M\hra \C^{2n+1}$. 
We do not see a comparably simple approach in more general complex contact manifolds.

The simplest example of an admissible set with a nontrivial homology 
is the circle $S^1=\{z\in\C:|z|=1\}$ in $M=\C$. 
In this case, Theorem \ref{th:Mergelyan} gives the following corollary.
Given $\rho>1$ we denote by $A_\rho$ the annulus $\{z\in\C: \rho^{-1}<|z|<\rho\}$. 

%
%
\begin{corollary}\label{cor:loops}
Let $X$ be a complex contact manifold. Every immersed Legendrian loop $S^1\to X$ of class 
$\Cscr^{r}(S^1,X)$ $(r\ge 1)$ can be approximated in $\Cscr^r(S^1,X)$ by embedded holomorphic 
Legendrian annuli $A_\rho\hra X$, where $\rho>1$ may depend on the map.
\end{corollary}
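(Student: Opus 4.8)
The plan is to derive the corollary from Theorem~\ref{th:Mergelyan} applied to the admissible set $S=S^1\subset\C$, for which $K=\emptyset$, $E=S^1$, and $H_1(S^1,\Z)\cong\Z$ is generated by the loop $S^1$ itself. Two gaps must be bridged. First, Theorem~\ref{th:Mergelyan} requires the initial map to be of class $\Ascr^{r+2}$ with $r\ge2$, whereas an immersed Legendrian loop of class $\Cscr^r(S^1,X)=\Ascr^r(S^1,X)$ (the equality because $S^1$ has empty interior) is given only for $r\ge1$. Second, the theorem outputs a holomorphic Legendrian map on an abstract open neighbourhood $U\subset\C$ of $S^1$, and an annulus $A_\rho$ must be carved out of $U$.

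First I would establish a smoothing lemma: $\Cscr^\infty$ immersed Legendrian loops are dense, in the $\Cscr^r(S^1,X)$ topology, in the set of immersed Legendrian loops of class $\Cscr^r$, $r\ge1$. Cover $f(S^1)$ by finitely many Darboux charts in which $\xi=\ker\bigl(dz+\sum_{j=1}^n x_j\,dy_j\bigr)$, so that along each subarc the Legendrian condition is the single underdetermined equation $z'=-\sum_j x_j y_j'$. Mollifying the horizontal components $x_j,y_j$ to nearby smooth ones and recovering $z$ by integration yields a smooth curve that satisfies this equation and is $\Cscr^r$-close to $f$ (the vertical component is $\Cscr^r$-close since its derivative is $\Cscr^{r-1}$-close and its value is pinned at one endpoint of the subarc); patching the pieces globally produces a smooth curve, close to $f$ and still an immersion by $\Cscr^1$-openness of immersions, which is Legendrian up to a closing-up defect around $S^1$. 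That defect has size comparable to the approximation error, and it can be annihilated by a further $\Cscr^r$-small correction of the horizontal data; this uses $n\ge1$ and the immersivity of $f$, which guarantee that the closing-up defect can be moved freely by such corrections (implicit function theorem, or an intermediate value argument). Equivalently, one may extend $f$ to a $\Cscr^r$ immersion $F:S^1\times\B^{2n}\to X$ as in Lemma~\ref{lem:normalbundle}, approximate $F$ by a holomorphic immersion $\tilde F$ on $U\times\rho\B^{2n}$, and solve the period-vanishing problem for the holomorphic contact form $\tilde F^*\eta$ near $S^1\times\{0\}^{2n}$, pushing the resulting real-analytic Legendrian loop forward by $\tilde F$. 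This smoothing step, in particular the closing-up condition, is the point I expect to be the main obstacle; it is a far cheaper instance of the period analysis carried out in the proof of Theorem~\ref{th:Mergelyan}.

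Granting the lemma, fix $\epsilon>0$ and choose a $\Cscr^\infty$ immersed Legendrian loop $g:S^1\to X$ with $\|g-f\|_{\Cscr^r(S^1,X)}<\epsilon/2$. Set $r'=\max\{r,2\}\ge2$; then $g\in\Cscr^\infty(S^1,X)\subset\Ascr^{r'+2}(S^1,X)$. Applying Theorem~\ref{th:Mergelyan} to $g$ on $S=S^1$, with the finite set $A=\{p\}$ a single point of $S^1$ (on which $g$ is trivially injective), produces a holomorphic Legendrian embedding $\tilde f:U\to X$ from an open neighbourhood $U\subset\C$ of $S^1$ with $\|\tilde f-g\|_{\Cscr^{r'}(S^1,X)}<\epsilon/2$, hence a fortiori $\|\tilde f-g\|_{\Cscr^{r}(S^1,X)}<\epsilon/2$.

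Finally, since $U$ is an open neighbourhood of $S^1$ in $\C$, we have $\overline{A_\rho}\subset U$ for every $\rho>1$ sufficiently close to $1$; fix such a $\rho$. Then $\tilde f|_{A_\rho}:A_\rho\hra X$ is an embedded holomorphic Legendrian annulus, and by the triangle inequality $\|\tilde f|_{S^1}-f\|_{\Cscr^r(S^1,X)}\le\|\tilde f-g\|_{\Cscr^r(S^1,X)}+\|g-f\|_{\Cscr^r(S^1,X)}<\epsilon$. As $\epsilon>0$ was arbitrary, the corollary follows, with $\rho$ depending on $U$, hence on the loop and on $\epsilon$, as asserted.
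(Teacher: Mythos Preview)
Your proposal is correct and follows the same two-step scheme as the paper: first approximate the $\Cscr^r$ Legendrian loop by a $\Cscr^\infty$ one, then apply Theorem~\ref{th:Mergelyan} on the admissible set $S^1\subset\C$ and restrict to an annulus. The paper disposes of the smoothing step in one line by invoking Gromov's results on nonintegrable Pfaffian systems \cite[pp.\ 113--114]{Gromov1996}, whereas you sketch a hands-on argument via local Darboux charts, mollification of the horizontal components, and killing the closing-up defect by a small correction (or, alternatively, by rerunning the period argument from the proof of Theorem~\ref{th:Mergelyan} at lower regularity); both routes are valid, but the citation is cleaner and avoids the bookkeeping you correctly flag as the main nuisance.
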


To see this, we first use well-known results in nonintegrable Pfaffian systems 
(see Gromov \cite[pp.\ 113--114]{Gromov1996}) in order to approximate the given immersion by a smooth 
isotropic immersion, and we apply our Theorem \ref{th:Mergelyan} to this new smooth curve.  
It is furthermore possible to uniformly approximate any closed Jordan curve in $X$ by a Legendrian
immersion. Corollary \ref{cor:loops} is equivalent to the statement that every immersed Legendrian
loop $S^1\to X$ in a complex contact manifold can be approximated by real analytic Legendrian loops, 
since the complexification of such are holomorphic Legendrian immersion from small surrounding annuli.

\begin{problem}\label{prob:realanalytic}
Let $X$ be a complex contact manifold and $f:M\to X$ be a smooth isotropic immersion from
a compact real analytic manifold $M$. Is it possible to approximate $f$ by real analytic 
isotropic immersions $M\to X$?
\end{problem}

The answer to this question is affirmative if $(X,\xi)$ is a real analytic contact manifold,
i.e., if $\xi$ is a real codimension one contact subundle of $TX$; see 
Cieliebak and Eliashberg \cite[Sect.\ 6.7]{CieliebakEliashberg2012}. 
However, their method does not apply in the holomorphic case, for two reasons.
One is that holomorphic approximation theory is very different from real analytic approximation
theory, the latter allowing arbitrarily close approximation (even in fine topologies) everywhere, and 
not only on the given subset. This is impossible in the holomorphic case
where the approximants necessarily diverge outside the set of approximation, unless the 
original map extends holomorphically to a bigger domain, in which case the problem is void. 
The second one is that the proof in \cite{CieliebakEliashberg2012} 
relies on Gray's stability theorem for real contact structures, which fails 
even locally for real analytic deformations of complex contact structures.
I wish to thank Roger Casals and Nicola Pia for the following information
(private communication, November 2019). 
The space of $k$-dimensional distributions on $\R^n$ (i.e., vector subbundles 
of rank $k$ of the tangent bundle $T\R^n$) has functional dimension $k(n-k)$,
the dimension of the Grassmanian manifold of $k$-planes in $\R^n$. 
The symmetry group of diffeomorphisms is given by $n$ functions, so
the functional dimension of $k$-distributions on $\R^n$ up to diffeomorphisms is $k(n-k)-n$. 
The only cases when this is non-positive (and there is a unique
local normal form) are $(k,n)\in \{(1,n),(n-1,n),(2,4)\}$, corresponding to vector fields, 
real contact and even-contact structures, and the Engel case.  
Any other pair $(k,n)$ produces distributions which have functional moduli and thus are not stable. 
More precise information can be found in Bryant et al. \cite[Sect.\ II.5]{BryantAl1991};
see in particular the examples in \cite[pp.\ 48--49]{BryantAl1991}.

%
%
Theorem \ref{th:Mergelyan} reduces the problem of approximating Legendrian 
immersions $S\to X$ from an admissible set $S$ in a Riemann surface $M$ by 
holomorphic Legendrian immersions $M\to X$ to the following

{\em Runge approximation problem for Legendrian immersions:} 
Let $S$ be a compact set in a Riemann surface $M$ and $f:U\to X$ be a holomorphic 
Legendrian immersion from an open neighbourhood of $S$ to a complex contact manifold $(X,\xi)$.
When is it possible to approximate $f$ on $S$ by holomorphic Legendrian immersions $M\to X$?

%
%
The answer depends on the pair $S\subset M$, as well as on the complex structure and 
the contact structure on $X$. It is negative in general even if $X=\C^3$ and $S$ is a closed disc in $\C$. 
Indeed, in \cite{Forstneric2017JGEA} the author gave an example of a complex
contact structure $\xi$ on $\C^3$ which is Kobayashi hyperbolic; 
in particular, there are no nonconstant holomorphic Legendrian curves $\C\to (\C^3,\xi)$, 
and there are holomorphic Legendrian discs $\overline \D\to (\C^3,\xi)$ which are not 
approximable by holomorphic Legendrian discs $2\D\to (\C^3,\xi)$.
Hence, Theorem \ref{th:Mergelyan} cannot be improved in general
even if $X$ is an Oka manifold (see \cite[Sect.\ 5.4]{Forstneric2017E} for this notion), 
for it is the properties of the contact structure on $X$ that matter as well.
Without the Legendrian condition, Runge approximation holds (in the absence of topological
obstructions) for maps from all Stein manifolds, in particular from all open Riemann surfaces, 
to any Oka manifold; see \cite[Theorem 5.4.4]{Forstneric2017E}.

%
%
We now describe an application of Theorem \ref{th:Mergelyan} to Legendrian curves in 
the complex projective space $\CP^3$.
Recall (see LeBrun and Salamon \cite{LeBrun1995IJM,LeBrunSalamon1994IM}) that 
$\CP^3$ admits an essentially unique complex contact structure, which is determined by the $1$-form
\[ 
	\eta=z_0dz_1-z_1dz_0+z_2dz_3-z_3dz_2 
\] 
on $\C^4$ via the standard projection $\C^4\setminus \{0\} \to \CP^3$.
On any affine chart $\C^3\subset \CP^3$ this gives the standard contact structure on $\C^3$.
The analogous statements hold on $\CP^{2n+1}$ for any $n\in\N$.
It was shown by Alarc\'on, Forstneri\v c and L\'arusson (see 
\cite[Theorems 3.2 and 3.4]{AlarconForstnericLarusson2019X}) 
that holomorphic Legendrian curves and immersions from Riemann surfaces (both open and closed)
into $\CP^3$ satisfy the Runge approximation theorem with interpolation, 
and every Legendrian immersion $M\to\CP^3$  from an open Riemann surface can be 
approximated uniformly on compacts in $M$ by holomorphic Legendrian embeddings $M\hra \CP^3$. 
This gives the following corollary to Theorem \ref{th:Mergelyan}.

%
%
\begin{corollary} [Mergelyan theorem for Legendrian immersions into $\CP^3$] \label{cor:globalCP3}
Let $S$ be an admissible set in a Riemann surface $M$. Every Legendrian immersion $f:S\to\CP^3$ 
of class $\Ascr^{r+2}(S,\CP^3)$ $(r\ge 2)$ can be approximated in $\Cscr^r(S,\CP^3)$ by holomorphic 
Legendrian immersions $F:M\to\CP^3$, and by holomorphic Legendrian embeddings 
$F:M\hra\CP^3$ if $M$ is an open Riemann surface and $S$ has no holes in $M$. 
Furthermore, $F$ can be chosen to agree with $f$ on any given finite set of points in $S$.
\end{corollary}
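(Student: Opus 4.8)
The plan is to combine Theorem~\ref{th:Mergelyan}, applied to the complex contact manifold $X=\CP^3$, with the Runge approximation theorem with interpolation for holomorphic Legendrian curves in $\CP^3$ of Alarc\'on, Forstneri\v c and L\'arusson \cite[Theorems 3.2 and 3.4]{AlarconForstnericLarusson2019X}. Fix $f\in\Ascr^{r+2}(S,\CP^3)$, a finite set $A\subset S$, a prescribed finite jet of $f$ at the points of $A\cap\mathring S$, and $\epsilon>0$. First I would apply Theorem~\ref{th:Mergelyan} with $X=\CP^3$: it produces an open neighbourhood $U\subset M$ of $S$ and a holomorphic Legendrian immersion $\tilde f\colon U\to\CP^3$ that is $\epsilon/2$-close to $f$ in the $\Cscr^r(S,\CP^3)$ topology and agrees with $f$, together with the prescribed jet, at the points of $A$; moreover, if $f$ is injective on $A$, one may take $\tilde f$ to be an embedding near $S$. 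It then remains to approximate $\tilde f$ uniformly on $S$ by a holomorphic Legendrian immersion, respectively embedding, defined on all of $M$, while preserving its jet along $A$.

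Second, I would apply the global Runge theorem of \cite[Theorems 3.2 and 3.4]{AlarconForstnericLarusson2019X} to the Legendrian immersion $\tilde f$ near $S$. Since $\CP^3$ is compact and carries an essentially unique, flexible complex contact structure, this yields a holomorphic Legendrian immersion $F\colon M\to\CP^3$ arbitrarily close to $\tilde f$ uniformly on $S$ and still agreeing with $\tilde f$ --- hence with $f$, together with the prescribed jet --- along $A$; no $\Ocal(M)$-convexity of $S$ enters here, which is exactly why the immersion statement holds for an arbitrary admissible set $S$ in an arbitrary Riemann surface $M$ (open or compact). Choosing the error in this step smaller than $\epsilon/2$ and combining with the first step gives the desired $\Cscr^r(S,\CP^3)$ approximation of $f$ by a global holomorphic Legendrian immersion agreeing with $f$ along $A$. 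If in addition $M$ is an open Riemann surface and $S$ has no holes in $M$, I would shrink $U$ to a compact $\Ocal(M)$-convex set $S'$ containing $S$ in its interior (possible since $S$ then admits a neighbourhood basis of $\Ocal(M)$-convex compacta), run the same Runge step on $S'$, and finally invoke the general-position result of \cite{AlarconForstnericLarusson2019X} that Legendrian immersions of open Riemann surfaces into $\CP^3$ are approximable by Legendrian embeddings $M\hookrightarrow\CP^3$; the last perturbation can be arranged to fix the map along $A$ when $f|_A$ is injective, so that the interpolation is consistent with injectivity.

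I do not expect a genuine obstacle: granting Theorem~\ref{th:Mergelyan} and the cited $\CP^3$-results, the argument is a matter of chaining two approximations and keeping the interpolation data consistent. The points demanding care are the transition from the neighbourhood $U$ furnished by Theorem~\ref{th:Mergelyan} to a compact set on which the global Runge and embedding theorems apply in the required form --- in particular preserving the ``$S$ has no holes'' property in the embedding case --- and checking that the two successive interpolation conditions ($\tilde f=f$ with the prescribed jet along $A$, then $F=\tilde f$ along $A$) are imposed without conflict, and that in the embedding case the jet constraint along $A$ is compatible with injectivity, which is precisely the hypothesis that $f$ be injective on $A$.
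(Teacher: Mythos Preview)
Your proposal is correct and follows the same route as the paper: the corollary is stated immediately after recalling \cite[Theorems 3.2 and 3.4]{AlarconForstnericLarusson2019X} and is obtained precisely by chaining Theorem~\ref{th:Mergelyan} (to pass from $S$ to a neighbourhood) with those Runge and general-position results (to pass from a neighbourhood to all of $M$), carrying the interpolation along. Your remarks on the bookkeeping---shrinking $U$ to an $\Ocal(M)$-convex compact in the embedding case, and the consistency of the two interpolation steps---are exactly the small checks one has to make.
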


It was proved by R.\ Bryant in 1982 (see \cite[Theorem G]{Bryant1982JDG}) 
that every compact Riemann surface embeds into $\CP^3$ as a holomorphic Legendrian curve.
As pointed out in \cite{AlarconForstnericLarusson2019X}, the Runge approximation
theorem also holds for Legendrian curves and immersions in higher dimensional projective spaces 
of odd dimensions, thereby giving the corresponding generalisation of Corollary \ref{cor:globalCP3}.

%
%
%
Another positive result regarding Runge approximation of holomorphic Legendrian
immersions was given by Forstneri\v c and L\'arusson in 
\cite[Corollaries 14 and 16]{ForstnericLarusson2018X}; it pertains
to the projectivised cotangent bundle $X=\P(T^*Z)$ with its standard contact structure
of an arbitrary Oka manifold $Z$ of dimension $\ge 2$. The following is an immediate corollary.

\begin{corollary}\label{cor:PTZ}
Let $S$ be an admissible set without holes in an open Riemann surface $M$.
If $Z$ is an Oka manifold of dimension at least $2$, then every Legendrian 
immersion $S\to X=\P(T^*Z)$ of class $\Ascr^{r+2}(S,X)$ $(r\ge 2)$ can be approximated
in $\Cscr^r(S,X)$ by holomorphic Legendrian embeddings $M\hra X$.
\end{corollary}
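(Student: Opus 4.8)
The plan is to derive Corollary \ref{cor:PTZ} by concatenating Theorem \ref{th:Mergelyan} with the Runge approximation theorem for holomorphic Legendrian immersions into the projectivised cotangent bundle established by Forstneri\v c and L\'arusson in \cite[Corollaries 14 and 16]{ForstnericLarusson2018X}.

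First I would recall that the projectivised cotangent bundle $X=\P(T^*Z)$ of a complex manifold $Z$ with $\dim Z\ge 2$ carries its canonical holomorphic contact structure $\xi$, so that $(X,\xi)$ is a complex contact manifold of dimension $2\dim Z-1\ge 3$ and Theorem \ref{th:Mergelyan} applies to it. Given the Legendrian immersion $f:S\to X$ of class $\Ascr^{r+2}(S,X)$ with $r\ge 2$ and any $\epsilon>0$, Theorem \ref{th:Mergelyan} furnishes a holomorphic Legendrian immersion $\tilde f:U\to X$ on an open neighbourhood $U\subset M$ of $S$ with $\dist_{\Cscr^r(S,X)}(\tilde f,f)<\epsilon$. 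Since $S$ has no holes in the open Riemann surface $M$, it is $\Ocal(M)$-convex, and we may fix a compact $\Ocal(M)$-convex set $S'$ with $S\subset\mathring{S'}\subset S'\subset U$ on which $\tilde f$ is holomorphic.

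Second, $\tilde f$ is then a holomorphic Legendrian immersion defined on a neighbourhood of the $\Ocal(M)$-convex compact set $S'$, so by \cite[Corollaries 14 and 16]{ForstnericLarusson2018X} it can be approximated, uniformly on $S'$, by holomorphic Legendrian embeddings $F:M\hra X$. As $\tilde f$ is holomorphic on the neighbourhood $U$ of $S'$, uniform closeness of $F$ to $\tilde f$ on $S'$ upgrades, via the interior Cauchy estimates, to $\Cscr^r(S,X)$-closeness on $S$; combined with the first step, this yields a holomorphic Legendrian embedding $F:M\hra X$ arbitrarily close to $f$ in $\Cscr^r(S,X)$, as claimed.

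The only point requiring attention is the compatibility of the two ingredients at their junction---namely, that the Forstneri\v c--L\'arusson theorem indeed applies to a Legendrian immersion defined merely on a neighbourhood of an $\Ocal(M)$-convex admissible set and produces embeddings of the entire Riemann surface $M$. Since this is exactly the hypothesis under which \cite[Corollaries 14 and 16]{ForstnericLarusson2018X} are stated, no genuinely new argument beyond Theorem \ref{th:Mergelyan} is needed, and the corollary is, as indicated in the text, immediate.
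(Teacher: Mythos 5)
Your proposal is correct and follows exactly the route the paper intends: Theorem \ref{th:Mergelyan} reduces the problem to approximating a holomorphic Legendrian immersion from a neighbourhood of the $\Ocal(M)$-convex set $S$, and the Forstneri\v c--L\'arusson Runge approximation theorem for $\P(T^*Z)$ then supplies the global Legendrian embeddings, with Cauchy estimates upgrading uniform to $\Cscr^r$ closeness. The paper treats this as immediate and gives no further detail, so there is nothing to add.
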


With the help of Theorem \ref{th:Mergelyan} we show that approximation by global 
holomorphic Legendrian immersions $M\to X$ is always possible if we allow deformations 
of the complex structure on $M$.  The following result is proved in Section \ref{sec:soft}.

%
%
\begin{theorem}[The soft Oka principle for Legendrian immersions] \label{th:soft}
Assume that $(M,J_0)$ is an open Riemann surface and $K$ is a compact $\Ocal(M)$-convex subset of 
$M$. Let $X$ be a connected complex contact manifold endowed with a distance function
$\dist$ compatible with its manifold topology. Given a continuous map $f:M\to X$ 
which is a $J_0$-holomorphic Legendrian immersion $U\to X$ on an open neighbourhood $U$ of $K$,
there exist for every $\epsilon>0$ a complex structure $J$ on $M$ which agrees with $J_0$ on a 
smaller neighbourhood $U_1\subset U$ of $K$ and a $J$-holomorphic Legendrian embedding 
$F: (M,J)\hra X$ such that 
\[
	\sup_{p\in K} \dist\left(F(p),f(p)\right) < \epsilon
\]
and $F$ is homotopic to $f$ by a homotopy $f_t: M\to X$ $(t\in [0,1])$ such that  
$f_t$ is a holomorphic Legendrian immersion on $U_1$ for every $t\in [0,1]$.
\end{theorem}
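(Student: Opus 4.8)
The plan is to reduce the statement to Theorem \ref{th:Mergelyan} by first changing the complex structure on $M$ so that the given continuous map becomes, on a suitable exhaustion, a map whose restriction to an admissible set is Legendrian of the required regularity, and then invoking the Mergelyan theorem together with the Runge-type techniques already developed. First I would exhaust $M$ by an increasing sequence of smoothly bounded domains $K=M_0\Subset M_1\Subset M_2\Subset\cdots$ with $\bigcup_j M_j=M$, each $M_j$ being $\Ocal(M)$-convex, chosen so that each $M_{j+1}$ is obtained from a neighbourhood of $M_j$ by attaching either a single handle (a $1$-cell) or a disc (a $2$-cell), as in the standard proof of the Oka principle for maps from open Riemann surfaces; the no-change region $U_1$ will be the first such neighbourhood. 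On the attached $1$-cells I would replace the continuous map $f$ by a smooth \emph{isotropic} immersion, using the parametric $h$-principle for integral curves of nonintegrable Pfaffian systems (Gromov \cite[pp.\ 113--114]{Gromov1996}), keeping the old Legendrian map fixed near $K$ and on the already-treated part; this produces, on a core admissible set $S_j\subset M_j$ with $H_1(S_j,\Z)$ generated by the handles, a Legendrian immersion of class $\Ascr^{r+2}$ that is $J_0$-holomorphic near $K$.

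Next I would run the following inductive step. Suppose we have a complex structure $J_j$ on $M$, agreeing with $J_0$ on $U_1$, and a $J_j$-holomorphic Legendrian immersion $g_j$ on a neighbourhood of $M_j$ that approximates $f$ on $K$ and is homotopic to $f$ through maps that are Legendrian immersions on $U_1$. To extend across the passage from $M_j$ to $M_{j+1}$: in the noncritical (disc-attaching) case one simply applies Theorem \ref{th:Mergelyan} with the admissible set $S=M_j$, extending $g_j$ holomorphically to a neighbourhood of $M_{j+1}$ with small $\Cscr^r$-error; in the critical (handle-attaching) case, the union $M_j\cup(\text{new $1$-cell})$ is an admissible set $S$ carrying a smooth isotropic immersion agreeing with $g_j$ on $M_j$, and Theorem \ref{th:Mergelyan} again yields a holomorphic Legendrian immersion on a neighbourhood of $S$, hence (after a small isotopy of the attaching region) on a neighbourhood of $M_{j+1}$. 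In either case the new complex structure $J_{j+1}$ differs from $J_j$ only on the newly treated region, which is disjoint from $K$ and from $U_1$; the corrections are holomorphic Legendrian immersions, and by the embedding addendum in Theorem \ref{th:Mergelyan} (together with \cite[Theorem 1.2]{AlarconForstneric2019IMRN}) the last map in the sequence can be taken an embedding on the relevant compact. Letting the approximation errors $\epsilon_j$ decrease fast enough, the sequence $(J_j)$ stabilises on every compact and converges to a complex structure $J$ on $M$ equal to $J_0$ on $U_1$, and the maps $g_j$ converge uniformly on compacts to a $J$-holomorphic Legendrian map $F$; a final application of the approximation and general position results upgrades $F$ to an embedding while preserving the estimate over $K$. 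Concatenating the homotopies from each step (suitably reparametrised) produces the required homotopy $f_t$, Legendrian on $U_1$ throughout.

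The main obstacle, and the place where care is genuinely required, is the \emph{control of the complex structure} during the handle attachments: when we attach a $1$-cell to $M_j$ and then thicken the resulting admissible set to an open surface, the conformal structure on the thickened handle is not canonical, so one must track how $J_j$ must be modified there and ensure the modification is confined to a region bounded away from $K\cup U_1$ and away from the previously fixed pieces — and that at the next stage this region is interior, so no further change is forced on it. This is exactly the ``soft'' ingredient that lets us bypass the genuine Runge obstruction (which, as the introduction explains via the hyperbolic example of \cite{Forstneric2017JGEA}, makes the statement false with $J=J_0$ fixed). A secondary technical point is keeping the homotopies compatible across stages: one wants the homotopy produced at stage $j+1$ to restrict on $M_j$ to (a reparametrisation of) the identity-to-correction homotopy, which follows because Theorem \ref{th:Mergelyan}'s approximant is $\Cscr^r$-close to $g_j$ on $M_j$ and one interpolates linearly in a chart, Legendrian-izing by the same Pfaffian $h$-principle; the Legendrian condition on $U_1$ is maintained since $U_1$ is untouched at every stage.
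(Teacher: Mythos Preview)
Your outline follows the same inductive scheme as the paper (Morse exhaustion, smooth Legendrian extension over attached $1$-cells via Chow--Rashevski\u{\i}/Gromov, then Theorem \ref{th:Mergelyan}), but there is a genuine gap at the heart of the ``soft'' step, precisely where you yourself flag the main obstacle.

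In the noncritical step you write that Theorem \ref{th:Mergelyan} ``extends $g_j$ holomorphically to a neighbourhood of $M_{j+1}$.'' It does not: it produces a holomorphic Legendrian embedding on some neighbourhood $U_j$ of the admissible set $S_j=\overline{M_j}$ (or $\overline{M_j}\cup E_j$ in the critical case), and $U_j$ may be arbitrarily thin---this is exactly the Runge obstruction you cite from \cite{Forstneric2017JGEA}. You then say ``the new complex structure $J_{j+1}$ differs from $J_j$ only on the newly treated region,'' but you never say \emph{how} it is produced, and these two sentences are in tension: if Mergelyan already reached $M_{j+1}$, no change of $J$ would be needed. The paper resolves this cleanly by \emph{not} changing $J$ at all during the induction: it keeps the fixed structure $J_0$, lets the domains $W_j\Subset U_j$ where the Legendrian embedding lives be whatever small neighbourhoods Mergelyan allows (so $\bigcup_j W_j=\Omega$ may be a proper subdomain of $M$), and tracks diffeomorphisms $h_j:M\to M$ with $h_j(M_j)=W_j$, each $h_j=h_{j-1}\circ g_j$ with $g_j$ diffeotopic to the identity rel a neighbourhood of $\overline{M_{j-1}}\cup E_j$. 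The limit diffeomorphism $h:M\to\Omega$ exists because the $g_j$ stabilise on compacta, and one sets $J=h^*J_0$ and $F=\tilde f\circ h$ with $\tilde f=\lim f_j$. Your formulation (successively modifying $J_j$) is equivalent via $J_j=h_j^*J_0$, but only once this diffeomorphism mechanism is made explicit; without it the inductive step does not go through.

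Two smaller points. First, on an open surface the Morse handles have index $0$ or $1$; there is no ``disc ($2$-cell)'' attachment, and in the noncritical case you do not need Mergelyan at all---you already have a holomorphic Legendrian embedding on a neighbourhood of $\overline{W_{j-1}}$ and only need the diffeomorphism. Second, you cannot ``upgrade $F$ to an embedding'' at the end by a single general-position move: the paper builds embeddings $f_j$ at every stage and chooses $\epsilon_j<\tfrac12\min(\epsilon_{j-1},\delta_j)$, where $\delta_j$ guarantees that any $\delta_j$-perturbation of $f_j$ remains injective on $W_{j-1}$; this is what forces the limit to be an embedding.
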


Combining Theorems \ref{th:Mergelyan} and \ref{th:soft} we obtain the following corollary.

%
%
\begin{corollary}\label{cor:admissible}
Let $(M,J_0)$ be an open Riemann surface, $S\subset M$ be an admissible compact $\Oscr(M)$-convex 
subset of $M$, $X$ be a complex contact manifold, and $f:S\to X$ be a Legendrian immersion 
of class $\Ascr^{r+2}(S,X)$ for some $r\ge 2$. 
Then there exist a complex structure $J$ on $M$ which agrees with $J_0$ on a 
neighbourhood of $S$ and a $J$-holomorphic Legendrian embedding $F: (M,J)\hra X$ 
which approximates $f$ as closely as desired in the $\Cscr^r(S,X)$ topology.
\end{corollary}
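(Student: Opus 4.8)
The plan is to obtain Corollary \ref{cor:admissible} by chaining together the Mergelyan theorem (Theorem \ref{th:Mergelyan}) and the soft Oka principle (Theorem \ref{th:soft}). First I would apply Theorem \ref{th:Mergelyan}, with the given $r\ge 2$, to the Legendrian immersion $f\in\Ascr^{r+2}(S,X)$, producing a genuinely $J_0$-holomorphic Legendrian immersion $\tilde f\colon U\to X$ on an open neighbourhood $U$ of $S$ in $(M,J_0)$ which approximates $f$ as closely as desired in $\Cscr^r(S,X)$. Since $S$ is $\Oscr(M)$-convex, I would then fix an $\Oscr(M)$-convex compact \emph{neighbourhood} $S'$ of $S$ with $S'\subset U$ (such sets form a neighbourhood basis of $S$, by the standard approximation of $\Oscr(M)$-convex compacta in a Stein manifold), together with an intermediate compact set $S''$ satisfying $S\subset\mathring{S''}$ and $S''\subset\mathring{S'}$; these will be used for the derivative estimate at the end.

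Next I would extend $\tilde f$, restricted to a smoothly bounded relatively compact neighbourhood $V$ of $S'$ with $\overline V\subset U$, to a \emph{continuous} map $g\colon M\to X$. This is always possible: an open Riemann surface deformation retracts, relative to a compact subsurface, onto that subsurface with finitely many arcs attached, so the pair $(M,\overline V)$ is homotopy equivalent rel $\overline V$ to a relative CW complex whose cells have dimension at most $1$, and the only obstruction to extending a continuous map over such cells lies in $\pi_0(X)$, which vanishes since $X$ is connected. By construction $g$ is a $J_0$-holomorphic Legendrian immersion on the neighbourhood $V\supset S'$ of the $\Oscr(M)$-convex set $S'$. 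I would then feed $(M,J_0)$, the compact $\Oscr(M)$-convex set $K:=S'$, and the continuous map $g$ into Theorem \ref{th:soft}: for a tolerance $\epsilon'>0$ to be chosen at the very end, this yields a complex structure $J$ on $M$ that agrees with $J_0$ on a neighbourhood $U_1\subset V$ of $S'$ (in particular on a neighbourhood of $S$, as the corollary requires) and a $J$-holomorphic Legendrian \emph{embedding} $F\colon(M,J)\hra X$ with $\sup_{p\in S'}\dist(F(p),g(p))<\epsilon'$, hence $\sup_{p\in S'}\dist(F(p),\tilde f(p))<\epsilon'$.

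The one step that is not purely formal, and which I expect to be the main obstacle, is upgrading the $\Cscr^0$-estimate that Theorem \ref{th:soft} provides to the $\Cscr^r$-estimate claimed in the corollary; this is precisely why the soft Oka principle was applied on the neighbourhood $S'$ of $S$ rather than on $S$ itself. On the intermediate set $S''$, which lies in the region $U\cap U_1$ where $J=J_0$, both $\tilde f$ and $F$ are holomorphic for one and the same complex structure $J_0$, and they are uniformly $\epsilon'$-close on the larger set $S'$. Hence, once $\epsilon'$ is small enough that $F$ and $\tilde f$ have images in common holomorphic coordinate charts of $X$, the interior Cauchy estimates (applied chart by chart) upgrade the $\Cscr^0$-bound on $S'$ to a $\Cscr^r(S,X)$-bound for $F-\tilde f$ that tends to $0$ as $\epsilon'\to 0$. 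Combining this with the $\Cscr^r(S,X)$-approximation of $f$ by $\tilde f$ from the first step and choosing $\epsilon'$ sufficiently small, I conclude that $F$ approximates $f$ as closely as desired in $\Cscr^r(S,X)$; since $F$ is an embedding and $J=J_0$ on a neighbourhood of $S$, this is exactly the statement of Corollary \ref{cor:admissible}.
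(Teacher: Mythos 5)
Your proposal is correct and follows essentially the same route as the paper, which derives Corollary \ref{cor:admissible} simply by ``combining'' Theorem \ref{th:Mergelyan} with Theorem \ref{th:soft} and leaves the details implicit. You supply exactly the two points the paper glosses over --- the continuous extension of $\tilde f$ to all of $M$ (needed as input to Theorem \ref{th:soft}) and the upgrade from the $\Cscr^0$-estimate of the soft Oka principle to the claimed $\Cscr^r(S,X)$-estimate via an $\Oscr(M)$-convex compact neighbourhood $S'$ of $S$ and interior Cauchy estimates --- and both are handled correctly.
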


The first systematic treatment of the existence and approximation problem for 
holomorphic maps $M\to X$ from a Stein manifold $M$ (in particular, from an open Riemann surface)
to an arbitrary complex manifold, allowing homotopic deformations of the Stein structure
on the source manifold $M$, goes back to the 2007 papers
by Forstneri\v c and Slapar \cite{ForstnericSlapar2007MZ,ForstnericSlapar2007MRL}.
Results of this type are commonly referred to as instances of the {\em soft Oka principle}. 
Further examples can be found in the papers by 
Alarc\'on and L\'opez \cite{AlarconLopez2013JGEA}, Ritter \cite{Ritter2018JRAM}, and in the surveys 
\cite[Theorem 8.43 and Remark 8.44]{CieliebakEliashberg2012}, \cite[Sects. 10.9--10.11]{Forstneric2017E}, 
and \cite{Forstneric2018Korea}.

%
%
In another direction, we obtain the following corollary by combining Theorem \ref{th:Mergelyan}
with \cite[Theorem 1.3]{AlarconForstneric2019IMRN} by Alarc\'on and Forstneri\v c. 
The assumption in the latter result is that a given Legendrian immersion $f:M\to X$ is holomorphic 
on a neighbourhood of $M$ in an ambient open Riemann surface; this is now guaranteed 
by Theorem \ref{th:Mergelyan}. 
 
%
%
\begin{corollary}[The Calabi-Yau property of holomorphic Legendrian immersions] 
\label{cor:CY}
Assume that $M$ is a compact bordered Riemann surface and $(X,\xi)$ is a complex contact
manifold. Then, every Legendrian immersion $f:M\to X$ of class $\Ascr^4(M,X)$ can be approximated 
uniformly on $M$ by topological embeddings $F:M\hra X$ such that $F|_{\mathring M}:\mathring M \to X$ 
is a complete holomorphic Legendrian embedding. 
\end{corollary}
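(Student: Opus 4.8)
The plan is to deduce the corollary by combining Theorem~\ref{th:Mergelyan} with the Calabi--Yau-type result \cite[Theorem 1.3]{AlarconForstneric2019IMRN}. The latter produces, from a Legendrian immersion of a compact bordered Riemann surface which is holomorphic on a neighbourhood of $M$ in an ambient open Riemann surface, a topological embedding of $M$ into $X$ restricting to a complete holomorphic Legendrian embedding on $\mathring M$ and approximating the given immersion uniformly on $M$. Its one standing hypothesis --- holomorphy on a one-sided neighbourhood --- is exactly what Theorem~\ref{th:Mergelyan} now makes available for an arbitrary $\Ascr^4$ Legendrian immersion.

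First I would realise $M$ as a smoothly bounded compact domain in an open Riemann surface $\wt M$ (e.g.\ inside its Schottky double), so that $M$ becomes an admissible set in $\wt M$ in the sense of Definition~\ref{def:admissible}, with $K=M$ and $E=\emptyset$. The given map $f$ then lies in $\Ascr^4(M,X)=\Ascr^{r+2}(M,X)$ with $r=2\ge 2$, so Theorem~\ref{th:Mergelyan} applies: for every $\delta>0$ there exist an open neighbourhood $U\subset\wt M$ of $M$ and a holomorphic Legendrian immersion $g\colon U\to X$ which is $\delta$-close to $f$ in the $\Cscr^2(M,X)$ topology; in particular $g|_M$ is a Legendrian immersion of class $\Ascr^2(M,X)$ which is holomorphic on the neighbourhood $U$ of $M$.

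Next I would feed $g|_M\colon M\to X$ into \cite[Theorem 1.3]{AlarconForstneric2019IMRN}. This yields a topological embedding $F\colon M\hra X$ with $F|_{\mathring M}\colon\mathring M\to X$ a complete holomorphic Legendrian embedding and with $\dist\bigl(F(p),g(p)\bigr)<\delta$ for all $p\in M$. By the triangle inequality,
\[
	\sup_{p\in M}\dist\bigl(F(p),f(p)\bigr)\ \le\ \sup_{p\in M}\dist\bigl(F(p),g(p)\bigr)+\sup_{p\in M}\dist\bigl(g(p),f(p)\bigr)\ <\ 2\delta,
\]
and letting $\delta\downarrow 0$ gives the asserted uniform approximation of $f$ by such embeddings.

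There is no real obstacle here beyond bookkeeping: one must only check that a compact bordered Riemann surface is admissible in a suitable open Riemann surface, that the loss of two derivatives in Theorem~\ref{th:Mergelyan} still leaves an input smooth enough for \cite[Theorem 1.3]{AlarconForstneric2019IMRN} (which is why the hypothesis is $\Ascr^4$, i.e.\ $r=2$), and that approximation in $\Cscr^2$ followed by uniform approximation composes to uniform approximation. All the analytic substance is already contained in Theorem~\ref{th:Mergelyan} and in the cited completeness result.
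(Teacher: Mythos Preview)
Your proposal is correct and follows essentially the same approach as the paper: the paper also derives Corollary~\ref{cor:CY} by combining Theorem~\ref{th:Mergelyan} with \cite[Theorem~1.3]{AlarconForstneric2019IMRN}, noting that the latter's hypothesis of holomorphy on a neighbourhood of $M$ in an ambient open Riemann surface is precisely what Theorem~\ref{th:Mergelyan} supplies. Your added details (embedding $M$ in its double, checking admissibility, and the triangle-inequality bookkeeping) are accurate and make the deduction explicit.
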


Here, a map $F:\mathring M \to X$ is said to be complete if the pull-back $F^*g$ of a Riemannian
metric $g$ on $X$ is a complete metric on $\mathring M$. Since the images of the maps $F$ in the above
corollary lie in a compact neighbourhood of $f(M)$, the choice of $g$ is unimportant.

Using the terminology of \cite[Definition 6.1]{AlarconForstnericLarusson2019X},
Corollary \ref{cor:CY} asserts that holomorphic Legendrian curves from compact bordered Riemann surfaces 
to an arbitrary complex contact manifold enjoy the {\em Calabi-Yau property}. This terminology
derives from the classical Calabi-Yau problem for immersed minimal surfaces in Euclidean spaces
$\R^n$, $n\ge 3$; see the papers \cite{AlarconForstneric2019JAMS,AlarconForstneric2019RMI}
and the references therein.

%
%
From the Calabi-Yau property of Legendrian curves in $\CP^3$ we can infer
the analogous property of {\em superminimal surfaces} in the $4$-sphere $S^4$ with the spherical metric.
Among all minimal surfaces in $ S^4$, superminimal surfaces form a natural and important subclass. 
This term was introduced by Bryant \cite{Bryant1982JDG} in 1982, 
although they had been studied much earlier; see the survey in \cite{Forstneric2020JGEA}. 
Superminimal surfaces are characterised geometrically by the fact that the curvature ellipse in 
the normal plane to the surface at each of its points, which is determined by its second fundamental 
form in $S^4$, is a circle centred at the origin (see Friedrich \cite{Friedrich1984,Friedrich1997}).  

%
%
The connection between holomorphic Legendrian curves in $\CP^3$ and superminimal surfaces 
in $S^4$ was discovered by Bryant \cite[Sect.\ 2]{Bryant1982JDG}; here is a brief description.
Identifying $\C^4$ with the quaternionic $2$-plane $\H^2$, the natural projection 
\[
	h:\H^2_*=\H^2\setminus \{0\}\to \H\P^1=S^4
\] 
onto the $1$-dimensional quaternionic projective space 
splits as $h=\pi\circ \pi'$,  where $\pi':\H^2_*=\C^4_*\to \CP^3$ is the standard projection onto 
the complex projective $3$-space and $\pi:\CP^3\to S^4$ is the {\em Penrose's twistor map}
(see Penrose \cite{Penrose1967,Penrose2018}). This is a real analytic  
fibre bundle projection, whose fibres are projective lines in $\CP^3$, such that the differential of $\pi$ induces 
an isometry $d\pi:\xi\to TS^4$ from the contact subbundle $\xi\subset \CP^3$ in the Fubini-Study metric 
onto the tangent bundle of $S^4$ in the spherical metric. (There are actually two twistor bundles
$Z^\pm\to X$ over any Riemannian four-manifold $(X,g)$, but in the case when $X$ is the
sphere $S^4$ with the spherical metric both $Z^\pm$ can be identified with $\CP^3$;
see \cite{Forstneric2020JGEA} and the references therein.) 
Bryant showed that for any Riemann surface, $M$, postcomposition by $\pi$ 
yields a homeomorphism 
\begin{equation}\label{eq:L-SM}
	\pi_*: \Lscr(M,\CP^3) \lra \S(M,S^4)
\end{equation}
from the space $\Lscr(M,\CP^3)$ of holomorphic Legendrian  immersions $M\to\CP^3$ 
onto the space $\S(M,S^4)$ of superminimal immersions $M\to S^4$ of appropriate spin 
\cite[Theorems B, B', D]{Bryant1982JDG}. (The {\em Bryant correspondence} was  
extended by Friedrich \cite{Friedrich1984} to twistor bundles over an arbitrary oriented Riemannian 
four-manifold; see also \cite[Theorem 4.6]{Forstneric2020JGEA}.) 
Since $d\pi|_\xi:\xi\to TS^4$ is an isometry, the projection of a complete Legendrian curve in $\CP^3$ 
is a complete superminimal surface in $S^4$, and vice versa.

The following results are improvements of 
\cite[Corollary 7.3 and Theorem 7.5]{AlarconForstnericLarusson2019X}, respectively.
The difference is that we are now considering superminimal immersions defined on a 
compact  domain, and not on an open neighbourhood of it. Both results use the Bryant
correspondence  \eqref{eq:L-SM}  along with Theorem \ref{th:Mergelyan}.
Note however that the correspondence \eqref{eq:L-SM} essentially depends on  
complete nonintegrability of the contact subbundle $\xi\subset T\CP^3$, and it fails 
on admissible domains containing arcs. In fact, every immersed arc in $S^4$ lifts
isometrically to an immersed $\xi$-Legendrian arc in $\CP^3$.

%
%
\begin{corollary}[Mergelyan approximation theorem for superminimal surfaces in $ S^4$] 
\label{cor:MergelyanS4}
Let $K$ be a compact domain with piecewise $\Cscr^1$ boundary in a Riemann surface $M$.
Every conformal superminimal immersion $X:K\to S^4$ of class $\Cscr^5(K,S^4)$
can be approximated in $\Cscr^2(K,S^4)$ by complete superminimal immersions $Y:M\to S^4$. 
We may choose $Y$ to agree with $X$ to a given finite order at finitely many given points 
in $\mathring K$.
\end{corollary}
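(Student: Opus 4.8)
The plan is to transport the problem to $\CP^3$ by means of Bryant's correspondence \eqref{eq:L-SM}, apply Theorem \ref{th:Mergelyan} there, and then quote the approximation-by-complete-curves result \cite[Corollary 7.3]{AlarconForstnericLarusson2019X}, whose only restriction — that the datum be a superminimal immersion defined on an \emph{open} neighbourhood of $K$ — is precisely what Theorem \ref{th:Mergelyan} allows us to arrange starting from a superminimal immersion given only on $K$.

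First I would record that $S:=K$ is an admissible set in the sense of Definition \ref{def:admissible}, with $E=\overline{S\setminus K}$ empty. Applying Bryant's correspondence on the Riemann surface $\mathring K$, the conformal superminimal immersion $X\colon K\to S^4$ lifts through the twistor projection $\pi\colon\CP^3\to S^4$ to a holomorphic Legendrian immersion $f\colon\mathring K\to\CP^3$ with $\pi\circ f=X$ on $\mathring K$ (see \cite[Theorems B, B', D]{Bryant1982JDG} and \cite[Sect.\ 4]{Forstneric2020JGEA}). Because the twistor lift is built from $X$ together with its first-order derivatives, the hypothesis $X\in\Cscr^5(K,S^4)$ guarantees that $f$ extends to a map of class $\Ascr^4(K,\CP^3)$; this loss of one derivative is exactly what reconciles the hypothesis $\Cscr^5$ with the conclusion stated in the $\Cscr^2$ topology. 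If interpolation is requested at points $p_1,\dots,p_m\in\mathring K$, it suffices to prescribe the corresponding jets of $f$ there, since $\pi$ is smooth and $\pi\circ f=X$ near each $p_j$.

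Next I would apply Theorem \ref{th:Mergelyan} with $r=2$ to $f\in\Ascr^4(K,\CP^3)$: it is approximated in $\Cscr^2(K,\CP^3)$ by holomorphic Legendrian immersions $\tilde f\colon U\to\CP^3$ from open neighbourhoods $U\supset K$ in $M$, and $\tilde f$ may be chosen to agree with $f$ to any prescribed finite order at $p_1,\dots,p_m$. Postcomposing with $\pi$, the map $\tilde X:=\pi\circ\tilde f\colon U\to S^4$ is — again by Bryant's correspondence — a (real-analytic) conformal superminimal immersion on the open neighbourhood $U$ of $K$; since $\pi$ is smooth, $\tilde X$ approximates $X=\pi\circ f$ as closely as desired in $\Cscr^2(K,S^4)$ and agrees with $X$ to the prescribed order at the $p_j$.

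Finally I would feed $\tilde X$ into \cite[Corollary 7.3]{AlarconForstnericLarusson2019X}: being a conformal superminimal immersion defined on the open neighbourhood $U$ of the compact domain $K$, it can be approximated in $\Cscr^2(K,S^4)$ by a \emph{complete} conformal superminimal immersion $Y\colon M\to S^4$, which moreover can be chosen to agree with $\tilde X$ — hence with $X$ — to the given finite order at $p_1,\dots,p_m$. Concatenating the two approximations produces the desired $Y$. The one point requiring genuine care is the first step: one must verify that Bryant's lift is well behaved up to the (merely piecewise $\Cscr^1$) boundary of $K$ and costs exactly one derivative, so that Theorem \ref{th:Mergelyan} applies with $r=2$ and delivers $\Cscr^2$-approximation; granting this, the corollary is an assembly of Theorem \ref{th:Mergelyan} with results already in the literature.
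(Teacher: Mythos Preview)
Your proposal is correct and follows precisely the route the paper indicates: lift $X$ to a Legendrian immersion $f\in\Ascr^4(K,\CP^3)$ via the Bryant correspondence \eqref{eq:L-SM}, apply Theorem \ref{th:Mergelyan} with $r=2$ to obtain a holomorphic Legendrian immersion on a neighbourhood of $K$, and then invoke \cite[Corollary 7.3]{AlarconForstnericLarusson2019X}; the paper itself does not spell out a proof beyond the sentence ``Both results use the Bryant correspondence \eqref{eq:L-SM} along with Theorem \ref{th:Mergelyan}.'' Your derivative bookkeeping ($\Cscr^5\to\Ascr^4$ under the lift, then $\Ascr^4\to\Cscr^2$ under Theorem \ref{th:Mergelyan}) is exactly the intended accounting, and your caveat about boundary regularity of the twistor lift is the only point that genuinely needs checking---it holds because the lift is given by an explicit algebraic expression in $X$ and its first derivatives.
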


%
%
\begin{corollary}[Calabi-Yau theorem for conformal superminimal surfaces in $S^4$]\label{cor:CYS4}
If $M$ is a compact bordered Riemann surface and $X:M\to S^4$ is a conformal superminimal immersion 
of class $\Cscr^5(M,S^4)$, then $X$ can be approximated as closely as desired 
uniformly on $M$ by a continuous map $Y:M\to S^4$ whose restriction to the interior $\mathring M$ 
is a complete, generically injective conformal superminimal immersion, and whose restriction to the boundary 
$bM$ is a topological embedding. 
\end{corollary}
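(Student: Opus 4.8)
The plan is to transfer the whole statement from $S^4$ to $\CP^3$ through Bryant's correspondence \eqref{eq:L-SM} and then invoke the Calabi--Yau property for holomorphic Legendrian curves already recorded in Corollary \ref{cor:CY}; in this sense the only genuinely new input over \cite[Theorem 7.5]{AlarconForstnericLarusson2019X} is that Theorem \ref{th:Mergelyan} lets us work with data given merely on the compact bordered surface $M$. First I would observe that, since $\pi\colon\CP^3\to S^4$ is a real analytic fibre bundle whose differential restricts to an isometry $d\pi|_\xi\colon\xi\to TS^4$, the correspondence $\pi_*$ realises the given conformal superminimal immersion as $X=\pi\circ f$ for a holomorphic Legendrian immersion $f\colon M\to\CP^3$. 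Passing from a superminimal immersion to its twistor lift costs one differentiation (the lift is assembled from $X$ together with its first/second order invariants), so the hypothesis $X\in\Cscr^5(M,S^4)$ yields $f\in\Ascr^4(M,\CP^3)$ --- precisely the regularity demanded by Corollary \ref{cor:CY}. (This derivative bookkeeping is the same as the one behind the $\Cscr^5\rightsquigarrow\Cscr^2$ loss in Corollary \ref{cor:MergelyanS4}.)

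Next I would apply Corollary \ref{cor:CY} to $f$: it produces a topological embedding $F\colon M\hra\CP^3$, arbitrarily close to $f$ uniformly on $M$, whose restriction $F|_{\mathring M}$ is a complete holomorphic Legendrian embedding. Put $Y:=\pi\circ F\colon M\to S^4$. Then $Y$ is continuous, and since $\pi$ is locally Lipschitz near the compact set $f(M)$, $Y$ approximates $X=\pi\circ f$ as closely as desired uniformly on $M$. By the Bryant correspondence $\pi_*$ sends holomorphic Legendrian immersions to conformal superminimal immersions, so $Y|_{\mathring M}=\pi_*\bigl(F|_{\mathring M}\bigr)$ is a conformal superminimal immersion; it is complete because $dF$ takes values in $\xi$ on $\mathring M$ and $d\pi|_\xi$ is an isometry, whence the induced metric $Y^\ast g_{S^4}$ agrees on $\mathring M$ with the complete metric $F^\ast g_{\mathrm{FS}}$.

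It then remains to arrange the two injectivity properties: that $Y|_{\mathring M}$ is generically injective and that $Y|_{bM}\colon bM\to S^4$ is a topological embedding. Because $F$ itself is injective, a failure of either amounts to $F$ sending two distinct points of $M$ into a common twistor fibre $\pi^{-1}(q)\cong\CP^1$. The fibres are transverse to $\xi$, hence to the horizontal (Legendrian) curve $F(M)$, so a dimension count for the map $M\times M\ni(p,q)\mapsto\bigl(\pi F(p),\pi F(q)\bigr)\in S^4\times S^4$ shows that for a generic such $F$ the set of ``coincident pairs'' is at most $0$--dimensional; consequently $Y|_{\mathring M}$ is injective off a discrete set, and a further small general--position adjustment makes $F|_{bM}$ meet each twistor fibre at most once, so that $Y|_{bM}$, being a continuous injection of a compact $1$--manifold, is a topological embedding. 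The main obstacle is exactly this last step: completeness of $F|_{\mathring M}$ is not an open property, so the general--position condition cannot simply be imposed afterwards but must be built into the recursive construction underlying Corollary \ref{cor:CY} (via \cite[Theorem 1.3]{AlarconForstneric2019IMRN}), in the same way as in the earlier superminimal Calabi--Yau constructions. Granting that, $Y$ inherits the desired uniform approximation of $X$, and the proof is complete.
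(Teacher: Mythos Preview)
Your proposal is correct and follows precisely the route the paper indicates: the paper does not write out a proof of this corollary but states that it (together with Corollary~\ref{cor:MergelyanS4}) ``use[s] the Bryant correspondence \eqref{eq:L-SM} along with Theorem~\ref{th:Mergelyan}'', the only novelty over \cite[Theorem~7.5]{AlarconForstnericLarusson2019X} being that Theorem~\ref{th:Mergelyan} permits starting from data on $M$ rather than on a neighbourhood of it. You have correctly unpacked this, including the regularity bookkeeping ($\Cscr^5$ on $X$ giving $\Ascr^4$ on the lift $f$, matching the hypothesis of Corollary~\ref{cor:CY}), the isometry $d\pi|_\xi$ transferring completeness, and the key subtlety that the generic injectivity of $Y|_{\mathring M}$ and the embeddedness of $Y|_{bM}$ cannot be obtained by perturbing $F$ a posteriori but must be built into the inductive construction behind Corollary~\ref{cor:CY}; this last point is indeed handled in \cite{AlarconForstnericLarusson2019X} and is not reproved here.
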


The corresponding results for conformal minimal surfaces in flat Euclidean spaces $\R^n$, $n\ge 3$, 
were obtained in \cite{AlarconDrinovecForstnericLopez2015PLMS,AlarconForstneric2019RMI}.
After the completion of this paper, the author obtained an analogue of Corollary \ref{cor:CYS4}
for an arbitrary self-dual or anti-self-dual Einstein four-manifold $(X,g)$ in place of $S^4$; 
see \cite[Theorem 1.2]{Forstneric2020JGEA}. 

%
%
\section{Tubular neighbourhoods of immersions from admissible sets}\label{sec:tubular}

In this section we prepare some necessary material concerning immersions 
from admissible sets in Riemann surfaces into arbitrary complex manifolds. The main
result that we shall need in the sequel is Lemma \ref{lem:normalbundle}.
We begin by recalling the following result from \cite[Proposition 3.3.2]{Forstneric2017E}.
The hypothesis that the manifold $Z$ be Stein is accidentally missing in the cited source. 
We include a sketch of proof for the convenience of the reader.

%
%
\begin{proposition}
\label{prop:fibre-linearization}
Assume that $\pi: Z\to M$ is a holomorphic submersion from a Stein manifold $Z$ to a complex 
manifold $M$. Denote by $E=\ker d\pi \to Z$ the vertical tangent bundle of $\pi$,
and let $0_z$ denotes the origin in the fibre $E_z$ of $E$ over $z\in Z$.
Then there are an open Stein neighbourhood $\Escr\subset E$ of the zero section of $E$ 
and a holomorphic map $\phi: \Escr \to Z$ such that for every $z\in Z$ we have
$\phi(0_z)=z$ and $\phi$ maps the fibre $\Escr_z=\Escr\cap E_z$ biholomorphically 
onto a neighbourhood of $z$ in the fibre $Z_{\pi(z)}=\pi^{-1}(\pi(z))$.
We may choose $\Escr$ to be Runge in $E$ and to have convex fibres $\Escr_z$, $z\in Z$.
\end{proposition}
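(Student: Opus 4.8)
The plan is to construct $\phi$ locally and then patch. First I would reduce to a situation where $Z$ is embedded nicely: since $Z$ is Stein, by the embedding theorem realise $Z$ as a closed complex submanifold of some $\C^N$, and choose a holomorphic spray of retractions, i.e. a holomorphic tubular neighbourhood of $Z$ in $\C^N$. The real point, however, is that we must move along the fibres of $\pi$, so the construction should take place inside the fibres $Z_{\pi(z)}$ rather than in $Z$ itself. Concretely, I would work with the restricted submersion and use that $E=\ker d\pi$ is a holomorphic vector subbundle of $TZ$. Pulling back the exponential-type map of the ambient $\C^N$ and then projecting back to the fibre is the naive approach, but it need not land in the fibre; so instead I would do the following.

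Cover $M$ by Stein open sets $U$ over which $\pi$ is trivial, $\pi^{-1}(U)\cong U\times B$ with $B$ a Stein manifold (a ball after shrinking, or at least Stein). On $U\times B$ the vertical bundle is the pullback of $TB$, and one can take $\phi$ to be the map induced fibrewise by a holomorphic tubular neighbourhood / exponential map of $B$ in its own ambient embedding, transported by the trivialisation; this gives a local solution $\phi_U:\Escr_U\to Z$ with the required properties, and one can arrange $\Escr_U$ to have convex fibres by shrinking. The difficulty is that these local maps $\phi_U$ do not agree on overlaps. To globalise I would invoke the soft-function / Oka–Grauert machinery: the obstruction to patching such fibrewise neighbourhood maps is a cohomological one with values in a coherent (indeed locally free) sheaf on the Stein manifold $Z$, namely the sheaf of fibrewise vector fields along the zero section vanishing to appropriate order, so by Cartan's Theorem B it vanishes. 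More cleanly, I would use a holomorphic partition-of-unity-type argument over $Z$: average the local maps using a holomorphic ``convexity'' of the fibres — since each $\Escr_z$ is a convex subset of the vector space $E_z$, a convex combination $\sum_j \theta_j(z)\,\phi_{U_j}$, with $\{\theta_j\}$ a holomorphic (or merely smooth-in-$z$ but then corrected) partition of unity subordinate to $\{\pi^{-1}(U_j)\}$, still maps each fibre biholomorphically near $0_z$ and fixes the zero section; here one must be slightly careful because holomorphic partitions of unity don't exist, so the correct statement is that the space of such $\phi$ is an affine space over the sections of a coherent sheaf, hence the local solutions glue by Theorem B.

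The last points are the Runge property and the Stein and convex-fibre requirements for $\Escr$. Once $\phi$ is constructed on some neighbourhood $\Escr_0$ of the zero section, I would choose a Stein exhaustion–compatible neighbourhood: take a nonnegative strictly plurisubharmonic exhaustion $\rho$ of $E$ (which exists since $E\to Z\to \C^N$ makes $E$ Stein) and let $\Escr=\{\rho<c\}\cap\Escr_0$ for a suitable slowly-growing function $c=c(z)>0$; a sublevel set of a plurisubharmonic exhaustion is automatically Runge in $E$, and by further intersecting with the fibrewise-convex hull (the fibres of $E$ being vector spaces, one can replace $\Escr_z$ by its convex hull inside the domain of definition, shrinking if necessary so that $\phi$ remains injective there — injectivity on a convex shrinking follows from the inverse function theorem applied fibrewise and a compactness argument on compacts of $Z$) one gets convex fibres. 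I expect the main obstacle to be precisely the globalisation step: making the local fibrewise tubular-neighbourhood maps agree, which is where Steinness of $Z$ is essential (this is exactly the hypothesis noted as missing in the cited source) and where one must phrase the patching as the vanishing of a coherent cohomology group rather than as a naive convex average.
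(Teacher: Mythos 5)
Your proposal has two genuine gaps. First, the local model is wrong: a holomorphic submersion $\pi:Z\to M$ is \emph{not} in general locally trivial over the base, so there is no cover of $M$ by open sets $U$ with $\pi^{-1}(U)\cong U\times B$. (Submersions admit product charts locally on $Z$, not over open subsets of $M$; already for Stein $Z$ the fibres over nearby points of $M$ need not be biholomorphic, e.g.\ $Z=\{(z,w)\in\C^2: zw\ne 1\}$ with $\pi(z,w)=z$ has fibre $\C$ over $0$ and $\C^*$ over $z\ne0$.) Second, and more seriously, the globalisation step is not a proof. A ``convex combination'' $\sum_j\theta_j(z)\,\phi_{U_j}$ of maps with values in the fibres of $Z$ is undefined ($Z$ is a manifold, not a vector space), and the assertion that the local fibrewise linearisations form an affine space over the sections of a coherent sheaf is not true as stated: two such maps differ by a fibrewise biholomorphism near the zero section, which is a nonlinear object, so Cartan's Theorem B does not apply directly. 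Turning this into an honest cohomological gluing argument (via jets, a Cartan-type splitting lemma for nonlinear cocycles, or an iteration) is precisely the hard part, and it is left entirely open.

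The paper's proof sidesteps both difficulties by working globally from the start: by the Forster--Kripke extension of Cartan's Theorem A there exist finitely many global holomorphic vector fields $V_1,\dots,V_N$ on $Z$ tangent to the fibres of $\pi$ and spanning $E_z=\ker d\pi_z$ at every point. Composing their flows gives a single holomorphic, fibre-preserving map $\Phi(z,t)=\phi^1_{t_1}\circ\cdots\circ\phi^N_{t_N}(z)$ on a neighbourhood of the zero section of $Z\times\C^N$ with $\Phi(z,0)=z$. The derivative $\Theta(z):\C^N\to E_z$ at $t=0$ is surjective, so Theorem B splits $Z\times\C^N=\ker\Theta\oplus E'$ with $E'\cong E$, and restricting $\Phi$ to $E'$ and shrinking yields the fibrewise biholomorphism $\phi$. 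Note that Theorem B enters only to split a short exact sequence of vector bundles, a genuinely linear problem, not to glue nonlinear local data. If you want to salvage your approach, you would have to replace the patching step by something of this flavour anyway, so the global flow construction is both simpler and the standard route.
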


\begin{proof}
By an extension of Cartan's Theorem A (see Forster \cite[Corollary 4.4]{Forster1967MZ} 
or Kripke \cite{Kripke1969}) there are finitely many holomorphic vector fields $V_1,\ldots,V_N$
on $Z$ which are tangent to the fibres of $\pi$ and span the vertical tangent space 
$E_z=\ker d\pi_z$ of $Z$  at every point $z\in Z$.  Let $\phi^i_t$ denote the holomorphic 
flow of $V_i$ for the complex time $t$. 
There is an open neighbourhood $\Omega$ of the zero section $Z\times \{0\}^N$
in the trivial bundle $Z\times \C^N$ such that the holomorphic map $\Phi:\Omega\to Z$ given by 
\[
	\Phi(z,t)=\phi^1_{t_1}\circ \cdots \circ \phi^N_{t_N}(z),\quad\  z\in Z,\ 
	t=(t_1,\ldots,t_N)
\]
is well defined on $\Omega$. Clearly, $\Phi(z,0)=z$ and $\pi\circ \Phi(z,t)=\pi(z)$
for all $(z,t)\in\Omega$. Since the vectors $V_i(z)$ span $E_z$ at every point $z\in Z$, the map
\[
	\Theta(z) = \frac{\di}{\di t}\Phi(z,t)\Big|_{t=0} : \C^N\lra E_z
\] 
is surjective, and hence $\ker\Theta$ is a holomorphic vector subbundle 
of $Z\times \C^N$. By Cartan's Theorem B we have 
$Z\times \C^N=\ker\Theta  \oplus E'$ for some holomorphic vector subbundle
$E'\subset Z\times \C^N$. Clearly, the restriction $\Theta|_{E'}:E'\to E$ is a holomorphic 
vector bundle isomorphism, so we may identify $E$ with the subbundle $E'\subset Z\times \C^N$. 
Let $\Escr=\Omega\cap E$. By shrinking $\Omega$ around the zero section if necessary, 
the implicit function theorem shows that the holomorphic map $\phi=\Phi|_{\Escr}:\Escr\to Z$ 
is fibrewise biholomorphic.
\end{proof}

The following lemma provides a coordinate Stein neighbourhood of the graph of a
map of class $\Ascr(S)$ over an admissible set $S$ in a Riemann surface.

%
%
\begin{lemma}\label{lem:Steinnbd}
Assume that $S$ is an admissible set in a Riemann surface $M$, $X$ is a complex
manifold, and $f: S\to X$ is a map of class $\Ascr(S,X)$. Then, the graph
\[
	G_f=\{(p,f(p)): p\in S\}\subset M\times X
\]
of $f$ has a Stein neighbourhood $\wt \Omega \subset M\times X$ which is fibrewise biholomorphic 
to a Stein domain $\Omega \subset M\times \C^n$, $n=\dim X$. More precisely,
there is a biholomorphic map $\Phi: \Omega\to \wt\Omega$
which commutes with the base projections $M\times \C^n\to M$ and $M\times X\to M$.
\end{lemma}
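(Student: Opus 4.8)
The plan is to build $\wt\Omega$ by starting from a tubular neighbourhood of the graph inside $M\times X$ and then applying the fibre-linearisation of Proposition~\ref{prop:fibre-linearization}. First I would observe that since $S$ is compact, there is a bounded open neighbourhood $V$ of $S$ in $M$ and a holomorphic embedding of a neighbourhood of $G_f$ in $M\times X$; the issue is to make the resulting neighbourhood Stein and to trivialise it over $M$. To this end, cover the compact set $f(S)\subset X$ by finitely many coordinate charts $W_k\subset X$ biholomorphic to domains in $\C^n$. Using a standard patching argument over the admissible set $S$ (which has finitely generated topology, so this is a finite bookkeeping matter), one produces an open set $U\subset M$ containing $S$ together with a holomorphic map $f$ is extended — actually by the Mergelyan-type approximation statement for maps into arbitrary complex manifolds quoted in the introduction (\cite[Theorem 16 and Corollary 9]{FornaessForstnericWold2018}), $f$ extends to a holomorphic map $\hat f: U\to X$ from some neighbourhood $U\subset M$ of $S$, and moreover we may shrink $U$ so that $U$ is a Stein (even Runge) neighbourhood of $S$, e.g.\ a tubular neighbourhood of the piecewise-$\Cscr^1$ set $S$.

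Next I would apply Proposition~\ref{prop:fibre-linearization} to the trivial projection $\pi: U\times X\to U$. Its total space $U\times X$ need not be Stein, but the vertical tangent bundle is $E = U\times TX$, and after restricting to a suitable relatively compact Stein neighbourhood of the graph $G_{\hat f}$ we are in the Stein setting required by the proposition. Concretely: the graph $G_{\hat f}=\{(p,\hat f(p)):p\in U\}$ is a closed complex submanifold of $U\times X$; a holomorphic retraction onto it, combined with the coordinate charts $W_k$, gives a biholomorphism of a neighbourhood of $G_{\hat f}$ onto a neighbourhood of the zero section of the normal bundle $N_{G_{\hat f}}$, which we may identify with $\hat f^* TX \to U$, a holomorphic vector bundle over the open Riemann surface $U$. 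Since $U$ is Stein (being an open subset of a Riemann surface that is Runge in $M$, or simply because every open Riemann surface is Stein), this bundle is trivial: $\hat f^*TX\cong U\times\C^n$. Pulling back an open convex-fibred Stein subdomain of $U\times\C^n$ through this chain of biholomorphisms yields the desired $\Omega\subset M\times\C^n$ and $\wt\Omega\subset M\times X$, together with a fibre-preserving biholomorphism $\Phi:\Omega\to\wt\Omega$ restricting to the graph of $\hat f$ (hence of $f$ on $S$) on the zero section. Finally, shrinking $\Omega$ fibrewise to make it Stein is automatic: a domain in $M\times\C^n$ with convex fibres over a Stein base $M$ (or over the Stein open set $U$) is Stein.

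The main obstacle, and the step deserving the most care, is the extension of $f$ from the admissible set $S$ to a genuinely holomorphic map on a neighbourhood together with control of the complex-analytic type of that neighbourhood: one must ensure simultaneously that $U$ is Stein, that $G_{\hat f}$ is a closed submanifold of $U\times X$ admitting a holomorphic tubular neighbourhood, and that the normal bundle is holomorphically trivial over $U$. The triviality is not an issue once $U$ is Stein and one-dimensional (every holomorphic vector bundle over an open Riemann surface is trivial), so the real content is the existence of the holomorphic extension $\hat f$ and of a holomorphic tubular neighbourhood of its graph — both of which follow from the cited approximation/extension results for maps from admissible sets into arbitrary complex manifolds and from standard Stein theory (Docquier--Grauert tubular neighbourhoods), applied on the Stein manifold $U\times X'$ where $X'\subset X$ is a Stein neighbourhood of the compact set $f(S)$. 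Everything else is routine: patching over the finitely many pieces of $S$, and the elementary fact that convex-fibred domains over Stein bases are Stein.
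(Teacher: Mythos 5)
There are two genuine gaps. First, you assert that $f\in\Ascr(S,X)$ \emph{extends} to a holomorphic map $\hat f:U\to X$ on a neighbourhood of $S$, citing the Mergelyan-type results of \cite{FornaessForstnericWold2018}. Those results give \emph{approximation}, not extension: already for $S=\overline\D$ a function in the disc algebra need not continue holomorphically past any boundary point. This is not cosmetic, because your construction produces a tube around the graph of $\hat f$ and you conclude that it contains $G_f$ ``on the zero section''; with only an approximant $\tilde f$ in hand, $G_f$ is not contained in $G_{\tilde f}$, and a tube built \emph{after} choosing $\tilde f$ may be too thin to swallow $G_f$. The paper resolves this by reversing the quantifiers: it first produces a Stein neighbourhood $Z$ of $G_f$ itself and the fibre-linearisation $\phi:\Escr\to Z$ of Proposition \ref{prop:fibre-linearization} on all of $Z$, and only afterwards chooses the approximant $\tilde f$ so close to $f$ that the resulting $\wt\Omega=\Phi(\Escr\cap\wt E)$ contains $G_f$.

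Second, the Stein neighbourhood of the graph is the real content of the lemma and you do not actually produce it. You propose to work in $U\times X'$ with $X'$ a Stein neighbourhood of the compact set $f(S)$ in $X$; such a neighbourhood need not exist (take $S$ an arc and $f$ a space-filling curve onto a compact rational curve in $X$ admitting no Stein neighbourhood). Covering $f(S)$ by charts and ``patching over the finitely many pieces of $S$'' is not a bookkeeping matter --- it is precisely the nontrivial step, and the paper imports it as Poletsky's theorem \cite{Poletsky2013}: the graph $G_f\subset M\times X$ of any $f\in\Ascr(S,X)$ has an open Stein neighbourhood. (Your fallback, a Docquier--Grauert/Siu tube around the Stein submanifold $G_{\tilde f}\subset U\times X$, gives a neighbourhood of the approximant's graph only, returning you to the quantifier problem above.) A minor further slip: a domain in $U\times\C^n$ with convex fibres over a Stein base need not be Stein (Hartogs domains $\{|w|<e^{-\varphi(z)}\}$ with $\varphi$ not subharmonic); in Proposition \ref{prop:fibre-linearization} the Steinness of $\Escr$ is arranged by construction, not deduced from convexity of the fibres. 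Apart from these points your architecture --- a fibre-preserving tubular neighbourhood combined with Oka--Grauert triviality of the vertical bundle over the open Riemann surface $U$ --- matches the paper's.
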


\begin{proof}
By Poletsky's theorem \cite{Poletsky2013} the graph $G_f$ has an open Stein neighbourhood
$Z$ in $M\times X$. (See also \cite[Theorem 32]{FornaessForstnericWold2018}
and the related discussion.) Let $\pi:Z\to M$ denote the restriction of the projection 
$\pi:M\times X\to M$, and let the domain $\Escr \subset E=\ker(d\pi)$ and the map
$\phi: \Escr \to Z$ be as in Proposition \ref{prop:fibre-linearization}.
By \cite[Corollary 9]{FornaessForstnericWold2018} we can approximate $f$ as closely as desired 
uniformly on $S$ by a holomorphic map $\tilde f: U\to X$ defined on an open neighbourhood 
$U\subset M$ of $S$. Denote by $\wt G \subset M\times X$ the graph of $\tilde f$ on $U$. If the approximation 
is close enough and shrinking $U$ around $S$ if necessary, we have that $\wt G\subset Z$. 
Consider the restricted bundle $\wt E=E|_{\wt G}\to \wt G$. Since 
$\wt G$ is biholomorphic to the open Riemann surface $U\subset M$,
the bundle $\wt E$ is holomorphically trivial by the Oka-Grauert principle 
(see \cite[Theorem 5.3.1]{Forstneric2017E}), so we can identify it with $U\times \C^n$, $n=\dim X$.
If the approximation of $f$ by $\tilde f$ is close enough, then the restriction of the map 
$\phi: \Escr \to Z$ to the domain $\Omega:=\Escr \cap \wt E \subset U\times \C^n$ 
provides a biholomorphic map
\begin{equation}\label{eq:Phi}
	\Phi: \Omega \stackrel{\cong}{\longrightarrow} \Phi(\Omega)=
	\wt\Omega\subset Z \subset M\times X
\end{equation}
satisfying the lemma. Indeed, $\Omega$ is a Stein domain in $\wt E \cong U\times \C^n$, 
the map $\Phi$ is fibrewise biholomorphic, and hence biholomorphic onto 
$\wt \Omega = \Phi(\Omega)\subset Z$, and $\wt \Omega$ contains the graph 
$G_f$ of $f$ provided that $\tilde f$ was chosen sufficiently close to $f$ on $S$.
\end{proof}

We denote by $\B^n$ the unit ball of $\C^n$.
%
%
\begin{lemma}\label{lem:normalbundle}
Assume that $S$ is an admissible set in a Riemann surface $M$ and $X$ is a complex
manifold of dimension $n$. Every immersion $f:S\to X$ of class $\Ascr^r(S,X)$ $(r\ge 1)$  
extends to an immersion $F:S\times \B^{n-1}\to X$ of class $\Ascr^r(S\times \B^{n-1}, X)$.
(According to our convention, this means that $F$ is holomorphic on $\mathring S\times \B^{n-1}$
and $F(x,\cdotp): \B^{n-1}\to X$ is holomorphic for every $x\in S$.)
\end{lemma}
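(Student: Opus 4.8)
The plan is to realize the desired extension $F$ as a fibrewise-linear thickening of $f$, obtained by composing a trivialized normal bundle of $f$ with the fibre-linearizing map from Proposition \ref{prop:fibre-linearization}. First I would invoke Lemma \ref{lem:Steinnbd} to obtain a Stein neighbourhood $\wt\Omega \subset M\times X$ of the graph $G_f$ together with a base-preserving biholomorphism $\Phi:\Omega\to\wt\Omega$ from a Stein domain $\Omega\subset M\times\C^n$. This reduces matters to the following situation: we have the holomorphic submersion $\pi:\Omega\to M$ (restriction of the projection), the section $s:S\to\Omega$ given by $s(x)=\Phi^{-1}(x,f(x))$, which is of class $\Ascr^r$, and we want to thicken $s$ in the fibre directions of $\pi$, which are $n$-dimensional. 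Since $f$ is an immersion, after shrinking $\wt\Omega$ the section $s$ has nowhere-vanishing differential transverse to something — more precisely, composing with $d\pi$ and using that $f$ is an immersion, the image curve is transverse to an $(n-1)$-dimensional complementary distribution.

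Next I would build the normal directions explicitly. Over a neighbourhood $U\subset M$ of $S$ approximate $f$ by a holomorphic immersion $\tilde f:U\to X$ as in \cite[Corollary 9]{FornaessForstnericWold2018}; its graph $\wt G$ sits in $\wt\Omega$ and, pulled back by $\Phi$, gives a holomorphic section $\tilde s:U\to\Omega$. The pullback $\tilde s^*(\ker d\pi)$ is a rank-$n$ holomorphic bundle over the open Riemann surface $U$, hence trivial by the Oka–Grauert principle \cite[Theorem 5.3.1]{Forstneric2017E}, so $\ker d\pi$ is trivialized along $\wt G$ and we get $n$ pointwise-independent vertical holomorphic vector fields there; restricting to $S$ and using that $f$ is an immersion (so $d\tilde f_x$ is injective and its image is transverse to a hyperplane in the fibre), one of these fields may be taken tangent to the curve direction and the remaining $n-1$, call them $W_1,\dots,W_{n-1}$, span an $(n-1)$-dimensional complement transverse to the image of $ds$. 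Now define $\sigma:S\times\C^{n-1}\to\Omega\subset M\times\C^n$ by $\sigma(x,w)=s(x)+\sum_{j=1}^{n-1} w_j W_j(s(x))$ in the linear fibre coordinates of $M\times\C^n$; this is of class $\Ascr^r$, holomorphic and $\C$-affine in $w$, and for $w$ in a small ball it maps into $\Omega$ and is an immersion in all variables because $f$ is an immersion and the $W_j$ are transverse. Composing with $\Phi$ and rescaling $w$ to the unit ball $\B^{n-1}$ yields the required map
\[
	F(x,w) = \mathrm{pr}_X\bigl(\Phi(\sigma(x,\delta w))\bigr) : S\times\B^{n-1}\to X
\]
for suitable $\delta>0$, of class $\Ascr^r(S\times\B^{n-1},X)$, with $F(x,0)=f(x)$ and $F$ an immersion.

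The main point requiring care — and the step I expect to be the real obstacle — is the transversality/immersion bookkeeping: one must arrange the $n-1$ chosen vertical fields $W_j$ so that the full differential $d_{(x,0)}F$ is injective on $T_xS\oplus\C^{n-1}$, not merely injective in the $w$-directions. Where $x\in K$ (a two-real-dimensional piece) $df_x$ has complex rank one, and the $W_j$ must span a complement to its image inside a rank-$(n-1)$ sub-distribution; where $x$ lies on the arc part $E$ the argument is similar but with a one-real-dimensional tangent. Because the $W_j$ come from a global trivialization of $\ker d\pi$ over the Riemann surface $U$, and the image of $d\tilde f$ is a line field there, choosing a generic constant-coefficient frame (using a small perturbation and Cartan's Theorem B to split off the image line subbundle) makes this complementarity hold on a neighbourhood of $S$; then immersivity of $F$ on $S\times\B^{n-1}$ follows by continuity after shrinking the ball. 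The remaining verifications — that $F$ is holomorphic on $\mathring S\times\B^{n-1}$, that $F(x,\cdot)$ is holomorphic for each $x\in S$, and that $F$ extends $f$ — are immediate from the construction since $\sigma$ is $\C$-affine in $w$ and $\Phi$ is biholomorphic.
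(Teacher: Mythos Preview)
Your proposal is correct and follows essentially the same route as the paper: invoke Lemma~\ref{lem:Steinnbd} to linearise fibres, thicken the graph of $f$ by adding linear fibre coordinates, and restrict to a rank-$(n-1)$ subbundle complementary to the tangent direction of the curve to obtain an immersion. The paper's presentation is a bit more direct at the key step you flag: rather than first trivialising $\ker d\pi$ via the approximation $\tilde f$ and then choosing an adapted frame, it fixes a nowhere-vanishing holomorphic vector field $V$ on $M$, defines for each $x\in S$ the unique fibre vector $W_x\in\C^n$ with $\partial_z\wt F_{(x,0)}(W_x)=df_x(V_x)$, and then splits $S\times\C^n=\span_\C(W)\oplus\nu$ using Heunemann's $\Ascr^r$-analogue of Cartan~B (your suggested tool); the restriction of the thickening to $\nu$ is then an immersion by construction, since the fibre differential maps $\nu_x$ onto a hyperplane complementary to $df_x(V_x)$.
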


\begin{proof}
We may assume that $M$ is an open Riemann surface.
Let $pr_X:M\times X\to X$ denote the projection onto the second component.
Choose a nowhere vanishing holomorphic vector field $V$ on $M$; such exists
by the Oka-Grauert principle (see \cite[Theorem 5.3.1]{Forstneric2017E})
and it defines a trivialisation of the tangent bundle $TM\cong M\times \C$.
For each $x\in S$ the vector $df_x(V_x)\in T_{f(x)}X$ is nonvanishing since 
$f:S\to X$ is an immersion.

Let $\Phi:\Omega\to\wt\Omega\subset M\times X$ be a fibre preserving 
biholomorphic map \eqref{eq:Phi} furnished by Lemma \ref{lem:Steinnbd}, 
where $\Omega$ is a Stein domain in $M\times \C^n$. In particular, the Stein domain $\wt\Omega$
contains the graph of the immersion $f:S\to X$. Hence, there is a unique map $g:S\to \C^n$ 
of class $\Ascr^r(S,\C^n)$ whose graph lies in $\Omega$ such that 
\[
	pr_X\circ \Phi(x,g(x))=f(x)\in X, \quad\   x\in S.
\]
Choose $\rho>0$ such that for every $x\in S$ and $z \in \rho \B^n \subset \C^n$
the point $(x,g(x)+z)$ belongs to $\Omega$. Consider the map $\wt F:S\times \rho\B^n \to X$
given by 
\begin{equation}\label{eq:wtF}
	\wt F(x,z)=pr_X\circ \Phi(x,g(x)+z),\quad\  x\in S,\ z\in \rho\B^n.
\end{equation}
Clearly, $\wt F\in \Ascr^r(S\times \rho\B^n,X)$, and for all $x\in S$ we have that $\wt F(x,0)=f(x)$ 
and $\wt F(x,\cdotp):\rho\B^n \to X$ maps the ball $\rho\B^n$ biholomorphically onto 
a neighbourhood of $f(x)$ in $X$. Thus, $\wt F$ has the required properties, except that 
it is not an immersion. In order to achieve this last condition, we shall restrict it to a suitable
vector subbundle of $S\times \C^n$ of rank $n-1$ and of class $\Ascr^r(S)$.
For each $x\in S$ let $W_x\in \C^n$ be the unique vector satisfying
\[	
	\di_z \wt F_{(x,0)} (W_x) =  df_x(V_x)\in T_{f(x)}X,
\]
where $\di_z$ denotes the partial differential with respect to $z\in \C^n$. 
The map $W:S\to\C^{n}_*$ is of class $\Ascr^{r-1}(S)$. By the Oka-Grauert principle and the 
approximation theorem for vector bundles of class $\Ascr(S)$
(see Heunemann \cite{Heunemann1986I,Heunemann1986II}) we can split the
trivial vector bundle $S\times \C^n$ into a direct sum 
\[
	S\times \C^n = \span_\C(W) \oplus \nu,
\]
where $\span_\C(W)$ is the complex line subbundle determined by $W$ and $\nu$ 
is a complementary trivial holomorphic vector subbundle of rank $n-1$  on a neighbourhood of $S$ in $M$. 
(In the case at hand, this result also follows from Lemma \ref{lem:GL}. A simple proof of Heunemann's approximation 
theorem for complex vector subbundles of class $\Ascr(S)$ in a trivial bundle $S\times \C^n$
by holomorphic vector subbundles over a neighbourhood of $S$ 
(see \cite[Theorem 1]{Heunemann1986I}) can be found in 
\cite[Theorem A.1, pp.\ 248--249]{DrinovecForstneric2007DMJ}.) 
Consider now the map
\[
	F=\wt F|_{\nu\cap (S\times \rho\B^n)}: \nu\cap (S\times \rho\B^n)  \to X
\]
of class $\Ascr^r$. For each $x\in S$ we have $F(x,0)=f(x)$ and the differential of $F$ in the vertical 
direction maps the fibre $\nu_x$ onto a hyperplane in $T_{f(x)}X$ complementary to the vector 
$df_x(V_x)$. Decreasing $\rho>0$ if necessary, it follows that $F$ 
is an immersion of class $\Ascr^r$ with trivial normal bundle.
After a change of coordinates mapping $\nu\cap (S\times \rho\B^n)$ onto $S\times \B^{n-1}$,
we get an immersion $F:S\times \B^{n-1}\to X$ satisfying the lemma.
\end{proof}

%
%

\section{Preliminaries} 
\label{sec:ODE}
In this section we prepare the necessary background for the proof of Theorem \ref{th:Mergelyan}.
In Subsect.\ \ref{ss:ODE} we recall some basic facts concerning 
solutions of ordinary holomorphic differential equations, with emphasis on the
case when the domain is an admissible set in a Riemann surface.
In Subsect.\ \ref{ss:period} we recall the notion of the period map,  which
plays an important role in the deformation theory of solutions of holomorphic differential equations.
In Subsect.\ \ref{ss:Runge} we show that every admissible set $S$ in a Riemann surface 
admits a homology basis consisting of finitely many closed curves whose union is Runge in $S$.
In Subsect.\ \ref{ss:degree} we recall a basic result on the topological
degree of a map. Finally, in Subsect.\ \ref{ss:generators} we recall a result of Arens concerning
generators of the algebra $\Ascr^r(S)$.

%
%
\subsection{Holomorphic differential equations on admissible sets}\label{ss:ODE}

Assume that $M$ is an open Riemann surface. Fix a holomorphic immersion $z:M\to \C$ 
furnished by Gunning and Narasimhan \cite{GunningNarasimhan1967}. Such an immersion 
provides a local holomorphic coordinate on a neighbourhood of any given point of $M$.
Let $S=K\cup E$ be an admissible set in $M$ (see Def.\ \ref{def:admissible}).
We shall need some basic results on the existence and behaviour of solutions of
ordinary differential equations 
\begin{equation}\label{eq:ODE}
	 dw = V(p,w,t) dz,  \quad\  w(p_0,t)=w_0, 
\end{equation}
where the independent variable is $p\in S$, the dependent variable $w$ belongs to some disc
$\triangle \subset \C$ around the origin, the differentials $dz$ and $dw$ are taken with respect
to $p$, $t=(t_1,\ldots,t_l)$ is a complex parameter in a ball $B\subset \C^l$ around the origin, 
and $V$ is a function of class $\Cscr^r$ on $S\times \triangle \times B \subset M\times \C^{l+1}$ 
for some $r\ge 1$ which is holomorphic on the interior $\mathring S \times \triangle \times B$. 
The function $V$ may be thought of as a nonautonomous vector field $V\!\frac{\di}{\di w}$ of 
type $(1,0)$ on the $w$-space, of class $\Cscr^r$ in $(p,w)\in S\times \triangle$ and
holomorphic over $\mathring S$, depending holomorphically on the parameter $t\in B$.
On a neighbourhood of a point $p_0\in S$, using the holomorphic immersion $z:M\to\C$
as a local coordinate near $p_0$ and setting $z_0=z(p_0)\in\C$, the equation
\eqref{eq:ODE} assumes a more familiar form
\begin{equation}\label{eq:ODElocal}
	\frac{dw}{dz} = V(z,w,t),  \quad\  w(z_0,t)=w_0.
\end{equation}
(The function $V$ in \eqref{eq:ODElocal} is obtained from the one in \eqref{eq:ODE} by locally expressing 
$p=p(z)$.) This is an ordinary differential equation for $w=w(z,t)$ as a function of $z$, with the parameter $t$.
The precise local nature of this equation depends on the location of the point $p_0\in S$
where we are considering it. If $p_0$ is an interior point of $S$, then \eqref{eq:ODElocal} is a holomorphic 
differential equation near $p_0$ in the local coordinate $z$ centred at $z_0=z(p_0)\in\C$.
Such an equation admits a local holomorphic solution which is uniquely
determined by an initial condition $w(z_0,t)=w_0$ and depends holomorphically on 
$(z_0,w_0,t)$ (see E.\ Hille \cite[Chapter 2]{Hille1976}). 
One may find a local solution in terms of the power series expansion 
\[
	w(z,t) = w_0+\sum_{k=1}^\infty c_k (z-z_0)^k. 
\] 
The coefficients $c_k=c_k(z_0,w_0,t)$ are uniquely determined by the equation \eqref{eq:ODElocal}. 
By the domination method of A.\ L.\ Cauchy or E.\ Lindel\"of one can show that the power series 
converges in a disc around the point $z_0$, and it is possible to estimate its radius in terms of $V$. 
(See E.\ Hille \cite[Sect.\ 2.6]{Hille1976} or the book by E.\ Lindel\"of \cite{Lindelof1905} from 1905.) 

This method does not apply at boundary points of the domain $K$ in the admissible
set $S$, or over the arcs $ E\subset S$. Let us now explain an alternative approach 
which works up to the boundary of $K$. (We shall consider the equation on the arcs
in $S\setminus K$ later.) Write the variables and the vector field in the form
\[
	z=x+\imath y,\quad w=w_1+\imath w_2, \quad V=V_1+\imath V_2
\]
with real components. 
The ordinary complex differential equation \eqref{eq:ODElocal} is equivalent to the following system 
of two real partial differential equations for $w=(w_1,w_2)$:
\begin{eqnarray}\label{eq:system1}
	\frac{\di w_1}{\di x} &=& V_1,       \quad\ \ \ \frac{\di w_2}{\di x} = V_2, \\
	\frac{\di w_1}{\di y} &=& -V_2,      \quad \frac{\di w_2}{\di y} = V_1.  \label{eq:system2}
\end{eqnarray}
A calculation shows that the vector fields 
\[
	X=\frac{\di}{\di x} + V_1\frac{\di}{\di w_1}+V_2\frac{\di}{\di w_2}, \qquad 
        Y=\frac{\di}{\di y}  -V_2\frac{\di}{\di w_1}+V_1\frac{\di}{\di w_2}
\] 
commute when the function $V=V_1+\imath V_2$ is holomorphic in $(z,w)$, 
and by continuity this persists up to the boundary of $K$. 
Hence, the flow $\phi_x$ of $X$ commutes with the flow $\psi_y$ of $Y$ on their domains
of definition. The local solution $w=w(z)$ of the initial value problem \eqref{eq:ODElocal} 
is then the composition of these two flows, projected to the $w$-space:
\begin{equation}\label{eq:flows}
	w(z_0+x+\imath y) = pr_w\circ \phi_x \circ \psi_y (z_0,w_0) 
	= pr_w\circ \psi_y \circ \phi_x  (z_0,w_0).
\end{equation}
Indeed, differentiation of \eqref{eq:flows} on $x$ and $y$ gives the equations \eqref{eq:system1}, \eqref{eq:system2} 
which are equivalent to \eqref{eq:ODElocal}. Clearly, a solution \eqref{eq:flows} also exists at the 
boundary points of $K$ provided $bK$ is piecewise $\Cscr^1$. This gives local, and often also global
holomorphic solutions of \eqref{eq:ODE} in terms of flows of vector fields, an ostensibly simpler problem. 
The same method applies if the vector field $V$ depend holomorphically on a parameter $t$.

Finally, we can parameterise an arc or a closed curve $\Gamma$ in $E=\overline{S\setminus K}$ 
by an injective immersion $p=h(s)$, $s\in [0,1]$, except that $h(0)=h(1)$ if $\Gamma$ is a closed 
Jordan curve. The differential equation \eqref{eq:ODE} then takes the following form on $\Gamma$:
\begin{equation}\label{eq:Jordancurve}
	\frac{d}{ds}  w(h(s),t) = V(h(s),w(h(s),t),t) \frac{d}{ds}  z(h(s)),\quad\  w(h(0),t)=w_0. 
\end{equation}
This is an ordinary differential equation for the function $s\mapsto w(h(s),t)$ on $s\in [0,1]$.

It is classical that solutions of the initial value problem \eqref{eq:ODElocal} depend holomorphically
on the initial point $(z_0,w_0)$ in the open set where $V$ is holomorphic (see \cite[Theorem 2.8.2]{Hille1976}). 
If $V$ is of class $\Ascr^{r}(K\times \Delta)$  then the flows  in \eqref{eq:flows} are of class 
$\Cscr^{r}$ up to the boundary of $K$, and hence so are the solutions. 
Furthermore, the solutions on $D=\{|z-z_0| <\rho\}\cap K$ 
corresponding to a pair of initial values $w_0,w_1$ at $z_0$ satisfy an estimate
\begin{equation}\label{eq:estimate1}
	|w(z;z_0,w_0)-w(z;z_0,w_1)| \le c_0 |w_0-w_1| e^{c|z-z_0|} 
\end{equation}
as long as their graphs remain in $D\times \triangle$. Here, $c>0$ is the Lipshitz constant 
for $V$ with respect to the variable $w$:
\[
	|V(z,w)-V(z,w')| \le c |w-w'|,\quad\  z\in D,\ w,w'\in \triangle,
\]
and the constant $c_0\ge 1$ reflects the geometry of $D$ (we may take $c_0=1$ if $D$ is convex).
This follows from Gr\"onwall's inequality; 
see \cite[Theorem 2.8.1]{Hille1976} and \cite[Lemma 1.9.3]{Forstneric2017E}.
Covering $S$ with finitely many discs such that the immersion $z:M\to\C$ gives
a local holomorphic coordinate of each of them, we get a similar estimate \eqref{eq:estimate1} globally on $S$.

Gr\"onwall's inequality can also be used to estimate the difference between solutions 
of a perturbed equation and those of the original equation. 
Explicitly, if a function $\wt V$ of class $\Ascr^{r}(S\times \triangle\times B)$
is uniformly close to $V$ on $S\times \triangle\times B$ then for any $p_0\in S$ the solution 
$w(p;p_0,w_0,t)$ of \eqref{eq:ODE} is uniformly close to the solution $\wt w(p;p_0,w_0,t)$ 
of the same equation for $\wt V$, provided that both solutions exist and their graphs remain in the given domain.
We refer to \cite[Lemma 1.9.4]{Forstneric2017E} for a precise estimate in a similar context.
From the equation \eqref{eq:ODE} we then infer that the solutions $w(p;p_0,w_0,t)$ and $\wt w(p;p_0,w_0,t)$
are also $\Cscr^1$ close to each other.
More generally, by a prolongation of the system we infer that if $\wt V$ is $\Cscr^r$ close to 
$V$ then their solutions for the same initial values are $\Cscr^{r+1}$ close to each other.

%
%
\subsection{The period map}\label{ss:period}

So far we have been considering local solutions of the equation \eqref{eq:ODE}.
If $S$ is a compact simply connected domain with piecewise $\Cscr^1$ boundary 
in a Riemann surface $M$, then by the uniqueness of local solutions they 
amalgamate into a global solution on $S$ provided they remain in the domain of the vector field $V$.
In particular, given a global solution $w(p;p_0,w_0)$ of \eqref{eq:ODE} on $S$
(neglecting the parameter $t$ for the moment), any number $w_1\in\C$ sufficiently
close to $w_0$ determines a solution $w(p;p_0,w_1)$ of the same equation on all of $S$
since, by \eqref{eq:estimate1}, the solution $w(p;p_0,w_1)$ remains close to the original one.
 
The situation is rather different on an admissible set $S$ with nontrivial 
fundamental group; equivalently, with nontrivial first homology group $H_1(S,\Z)$. 
(Note that $H_1(S,\Z)=\Z^l$ is a free abelian group on finitely many generators.) 
In this case, an important role in the global existence and perturbation theory of solutions 
is played by the period map along homologically nontrivial closed 
curves in $S$. We now recall this notion.

Assume that $C$ is a closed piecewise smooth Jordan curve in $S$, and choose a parameterisation
$h:[0,1]\to C$ with $h(0)=h(1)=p_0\in C$. In the parameter $s\in [0,1]$, the equation
\eqref{eq:ODE} with the initial condition $w(p_0,t)=w_0$ takes the form \eqref{eq:Jordancurve}.
Assume that the solution $w(s;w_0,t)$ with $w(0;w_0,t)=w_0$ exists for all $s\in [0,1]$. The number 
\begin{equation}\label{eq:period}
	\Pscr_C(p_0,w_0,t) = w(1;w_0,t)-w(0;w_0,t)= w(1;w_0,t)-w_0
\end{equation}
is called the {\em period} along $C$ for the data $(p_0,w_0,t)$. 
This period is independent of the choice of oriented parameterisation of $C$. 
A necessary condition for the existence of a single-valued solution of the equation \eqref{eq:ODE}
along the curve $C$ for the data $(p_0,w_0,t)$ is that 
\[
	\Pscr_C(p_0,w_0,t)=0,
\]
which means that the map $s\mapsto w(s;w_0,t)$ is $1$-periodic.
Conversely, if this holds then the equation \eqref{eq:ODE} has a single-valued 
solution on an annulus around $C$ intersected with $S$.
By varying the initial value at the point $p_0$ we obtain the map 
\[
	\zeta\longmapsto \Pscr_C(p_0,\zeta,t)\in\C, \quad\  \zeta\in \C\ \text{near}\ w_0,
\]
called the {\em Poincar\'e first return map} of the closed orbit $s\mapsto w(s;w_0,t)$.
This map describes the dynamics of orbits in a neighbourhood of the given
periodic orbit. The return map vanishes identically if and only if all nearby
solutions are periodic on $C$, and in such case their graphs form a foliation of the phase space
near the graph of the initial solution.

Let us consider more closely the case when $S=K$ is a connected compact domain 
with piecewise $\Cscr^1$ boundary and nontrivial homology group $H_1(K,\Z)$. Then, 
$H_1(K,\Z)=\Z^l$ is a free abelian group whose generators are
represented by smooth closed Jordan curves $C_1,\ldots,C_l\subset \mathring K$.
It is classical (see e.g.\ \cite{FarkasKra1992})
that the generating curves can be chosen to have a common base point $p_0\in \mathring K$,
to satisfy $C_i\cap C_j=\{p_0\}$ for $1\le i\ne j\le l$, and such that their union $C=\bigcup_{i=1}^l C_i$ 
is a Runge set in $K$, i.e., $\mathring K\setminus C$ has no connected components with 
compact closure in $\mathring K$. 
(The Runge condition will be important in our proof since it implies that functions 
$C\to\C$ of class $\Cscr^r(C)$ $(r\in \Z_+)$ can be approximated in $\Cscr^r(C)$ by functions 
holomorphic on a neighbourhood of $K$ in $M$; see \cite[Theorem 16]{FornaessForstnericWold2018}.) 
It follows that $K$ admits a deformation retraction onto $C$, and the equation \eqref{eq:ODE} 
has a single-valued solution on $K$ if and only if the solution remains in the domain 
of the equation and has vanishing period on each of the curves $C_1,\ldots, C_l$
in the homology basis of $K$.

%
%
\subsection{Runge homology basis of an admissible set} \label{ss:Runge}

When considering a holomorphic differential equation on a general admissible set $S=K\cup E$ in 
a Riemann surface $M$, we must control the periods of solutions on closed curves 
in a basis for the homology group $H_1(K,\Z)$, and also on the arcs in $S\setminus K$. 
To this end, we now construct a special Runge homology basis of an admissible set. 
Although the existence of such a homology basis was used before (see in particular the papers
\cite{AlarconForstneric2014IM,AlarconForstneric2019JAMS,AlarconForstnericLopez2017CM}), 
a detailed construction has not been given. The case when $S=K$ is a union of domains is classical; 
see Farkas and Kra \cite{FarkasKra1992}.

Given a Riemannian distance function $\dist$ on $M$ and a number $\epsilon>0$, the set
\begin{equation}\label{eq:Sepsilon}
	S_\epsilon = \{p\in M: \dist(p,S)<\epsilon\}
\end{equation}
is an open neighbourhood of $S$ which admits a deformation retraction onto $S$ provided
$\epsilon>0$ is small enough; we shall call such $S_\epsilon$ a {\em regular neighbourhood} of $S$. 

%
%
\begin{lemma}\label{lem:homology-basis-admissible}
A connected admissible set $S$ in a Riemann surface $M$  
has finitely generated first homology group $H_1(S,\Z)\cong \Z^l$,
and there is a homology basis $\Ccal=\{C_1,\ldots,C_l\}$ consisting 
of closed piecewise smooth Jordan curves in $S$ such that the compact set $|\Ccal|=\bigcup_{i=1}^l C_i$ 
is connected and Runge in any regular neighbourhood $S_\epsilon$ \eqref{eq:Sepsilon} of $S$,
and each curve $C_i\in \Ccal$ contains a nontrivial arc $I_i$ disjoint from $\bigcup_{j\ne i} C_j$.
\end{lemma}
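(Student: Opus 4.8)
The plan is to build the homology basis in two stages, first handling the domain part $K$ and then extending across the arcs $E$. Write $S=K\cup E$ as in Definition~\ref{def:admissible}, and let $K_1,\ldots,K_m$ be the connected components of $K$. For each component $K_j$ with nontrivial homology, the classical theory of compact bordered surfaces (see Farkas and Kra~\cite{FarkasKra1992}) gives a homology basis of $H_1(K_j,\Z)$ represented by smooth closed Jordan curves lying in $\mathring K_j$, meeting only at a common base point $p_j\in\mathring K_j$, and whose union is Runge in $K_j$ (i.e.\ its complement in $\mathring K_j$ has no relatively compact components). I would perturb these curves slightly, keeping them in $\mathring K_j$, so that each one contains a short free arc disjoint from all the others; this is possible because away from the single common point $p_j$ the curves are already pairwise disjoint, so any nonempty subarc avoiding a small disc around $p_j$ works. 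This deals with the "interior" generators.

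Next I would account for the topology contributed by the arcs and closed curves in $E$. Contract each component $K_j$ to a point; then $S$ deformation retracts onto a finite graph $G$ (vertices = the points $K_j$ together with the endpoints of arcs lying in $S\setminus K$, edges = the arcs of $E$ and the closed Jordan curves in $E$), and $H_1(S,\Z)\cong H_1(G,\Z)\cong H_1(K,\Z)\oplus\Z^{b}$ where $b$ is the first Betti number of $G$. Choosing a spanning forest of $G$ and taking, for each edge $e$ not in the forest, the unique cycle $\gamma_e$ formed by $e$ together with the forest path joining its endpoints, I get $b$ additional generators. Each such cycle $\gamma_e$ passes through some arcs of $E$ and through some components $K_j$; inside each $K_j$ it traverses I route it through a short embedded path joining the two relevant boundary points, chosen to avoid the interior generating curves already selected and to keep all these routing paths pairwise disjoint (possible since $\dim K_j=2$ and we only need finitely many disjoint arcs). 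The closed Jordan curves in $E$ that are disjoint from $K$ are themselves generators and trivially contain a free arc. The required free arc $I_i$ for each new generator is then any subarc of an edge $e\in E$ used by $\gamma_e$ but by no other chosen cycle — and if every edge of $\gamma_e$ is shared I shorten nothing and instead use a free subarc of the edge $e$ itself, which lies on $\gamma_e$ alone by construction of the cycle from the spanning forest.

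It remains to arrange connectedness of $|\Ccal|=\bigcup_i C_i$ and the Runge property. For connectedness: the union of the chosen cycles may a priori fall into several pieces (one cluster per component $K_j$, plus pieces along $E$), so I add finitely many connecting arcs inside $K$ and along the tree edges of $G$ to join everything into one connected set; adding arcs does not change the homology it generates and can be done while preserving disjointness from the designated free arcs $I_i$. For the Runge property in a regular neighbourhood $S_\epsilon$: since $S_\epsilon$ deformation retracts onto $S$, a compact subset $L\subset S_\epsilon$ is Runge in $S_\epsilon$ precisely when $S_\epsilon\setminus L$ has no relatively compact component, equivalently when $L$ carries all of $H_1(S_\epsilon,\Z)=H_1(S,\Z)$ and separates nothing superfluously; concretely, $|\Ccal|$ is Runge iff $S\setminus|\Ccal|$ (hence $S_\epsilon\setminus|\Ccal|$, for $\epsilon$ small) has no relatively compact component. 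Inside each $K_j$ this is the classical statement I already invoked, and I strengthen the selection there so that the curves plus connecting arcs cut $\mathring K_j$ into simply connected pieces each touching $bK_j$; along the graph $G$ the complement of a subgraph containing a spanning forest is automatically "unbounded" in the retract sense. So the only real work is to make the interior choices in each $K_j$ compatibly with the routing arcs — a finite, purely two-dimensional cut-and-paste argument.

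I expect the main obstacle to be precisely this bookkeeping: simultaneously (i) producing a homology basis, (ii) keeping the designated free arcs $I_i$ mutually in general position and off the other curves, (iii) keeping $|\Ccal|$ connected, and (iv) maintaining the Runge property — all four at once. None of these is individually hard, but the interaction near the junction points of $K$ with $E$ and near the common base points inside the $K_j$ requires care; the clean way to organise it is to fix the spanning-forest/cycle picture on the graph $G$ first, which rigidifies the combinatorics, and only then make the two-dimensional perturbations inside the components $K_j$, where there is enough room to separate finitely many arcs.
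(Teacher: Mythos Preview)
Your approach is essentially the same as the paper's: first take the classical Runge homology basis inside each component $K_j$, then pass to the quotient graph obtained by collapsing the $K_j$'s, choose a spanning forest, and let the non-tree edges supply the remaining generators together with their free arcs. The paper organises the second stage slightly differently---routing every graph-cycle through the fixed vertex $q_j\in K_j$ via arcs that run partly along $bK_j$ (with marked points $a_{i,j},b_{i,j}$ on each boundary circle to certify that no bounded complementary component is created). This is exactly the ``finite, purely two-dimensional cut-and-paste'' you anticipated, made concrete; in particular, sending everything through the common vertices $q_j$ solves the connectedness and Runge bookkeeping simultaneously and more cleanly than your plan of keeping the routing paths disjoint from the interior curves and repairing connectedness afterwards.

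One small imprecision to fix: you propose to achieve connectedness of $|\Ccal|$ by ``adding finitely many connecting arcs'' to the union. Since the statement defines $|\Ccal|=\bigcup_i C_i$ as the union of the basis curves themselves, you cannot enlarge it by extraneous arcs; you must instead modify the curves $C_i$ so that they already share points (e.g.\ reroute each $C_i$ to pass through a single common vertex, as the paper does). This is a mild change, but without it the set you make connected is not $|\Ccal|$.
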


\begin{proof}
Let $S=K\cup E$ with $K=\bigcup_{i=1}^m K_i$, where $K_1,\ldots,K_m$ are the connected 
components of $K$. If $K=\varnothing$ then $S$ is a single arc or closed curve and the result is trivial. 
Assume now that $K\ne \varnothing$. Since $S$ is connected, 
$E=\bigcup_{k=1}^{n} E_k$ is a union of finitely many smooth pairwise disjoint arcs $E_k$.
In each component $K_i$ of $K$ we choose an interior point $q_i\in \mathring K_i$, which we shall 
call the {\em vertex} of $K_i$. The boundary $bK_i=\bigcup_{j=1}^{m_i}\Gamma_{i,j}$ consists
of finitely many closed Jordan curves for some $m_i\ge 1$. 

The standard construction (see \cite{FarkasKra1992}) gives for each $i=1,\ldots,m$ 
a basis of $H_1(K_i,\Z)$ consisting of finitely many Jordan curves 
in $\mathring K_i$ passing through the vertex $q_i$ and not intersecting elsewhere
whose union is Runge in $K_i$; we put all these curves in the family $\Ccal$ under construction.

For every $i=1,\ldots,m$ and $j=1,\ldots, m_i$ we choose a pair of distinct points
$a_{i,j},b_{i,j}\in \Gamma_{i,j}$ such that $b_{i,j}\notin E$.
We connect $a_{i,j}$ to $q_i$ by a smooth embedded arc $A_{i,j} \subset \mathring K_i\cup\{a_{i,j}\}$,  
chosen such that these arcs do not intersect each other, nor any of the chosen curves in the 
homology basis for $K_i$, except at the vertex $q_i$. 

Recall that $E_1,\ldots,E_n$ are the connected components (arcs) of $E=\overline {S\setminus K}$.
Every arc $E_k$ in this collection is of one of the following three types. 

{\em Case 1:} $E_k$ is attached to $K$ with only one endpoint. Clearly, such arcs do not affect
the homology. Let $S_0$ denote the admissible set obtained by attaching all such arcs to $K$.

{\em Case 2:} The endpoints of $E_k$ lie in connected components
$\Gamma_{i,j_1}$, $\Gamma_{i,j_2}$ of $bK_i$ for some $i\in\{1,\ldots,m\}$.
(These components may be the same.)
In this case, a new homologically essential closed curve in $S$ is obtained by connecting
the endpoints of $E_k$ inside $K_i$ as follows. Having traversed $E_k$ to its endpoint
in $\Gamma_{i,j_2}$, continue to the point $a_{i,j_2}$ along the unique arc in 
$\Gamma_{i,j_2}$ which does not contain $b_{i,j_2}$, then go from $a_{i,j_2}$ to the vertex 
$q_i$ along the arc $A_{i,j_2}$, continue from $q_i$ to $a_{i,j_1}$ along $A_{i,j_1}$, and 
finally connect $a_{i,j_1}$ to the initial point of $E_k$ by the arc in $\Gamma_{i,j_1}$ 
not containing $b_{i,j_1}$. We add all closed curves obtained in this way to the family $\Ccal$, and we 
denote by $S_1$ the admissible set obtained by adding all such arcs $E_k$ to 
the set $S_0$ from the previous step. 
Note that $S_1$ still has the same number of connected components as $K$, namely $m$.

{\em Case 3:} The endpoints of $E_k$ belong to different connected components
of $K$ (and hence of the admissible set $S_1$); let us call such an arc a {\em bridge}. 

Let $S_2$ denote a connected admissible set obtained by attaching to $S_1$ 
a collection of bridges such that removing any one of them disconnects $S_2$. 
Paint these bridges black. (Such $S_2$ need not be unique.) 
Clearly, the inclusion $S_1\hra S_2$ induces an isomorphism 
of the homology groups $H_1(S_1,\Z)\cong H_1(S_2,\Z)$. 

We paint the remaining bridges red. For every red bridge $E_k$ 
there are pairwise distinct bridges $E_k=E_{k_1},\ldots,E_{k_s}$, all but $E_k$ black,
and connected components $K_{i_1},\dots, K_{i_s}$ (islands) of $K$ such that 
$E_{k_1}=E$ connects $K_{i_1}$ to $K_{i_2}$, $E_{k_2}$ connects  
$K_{i_2}$ to $K_{i_3}$, etc., until the cycle closes with the bridge $E_{k_s}$ connecting 
$K_{i_s}$ back to $K_{i_1}$. We obtain a new closed curve in $S$ 
by connecting the end point of each bridge $E_{k_j}$ in the above sequence within the 
domain $K_{i_j}$ to the initial point of the next bridge $E_{k_{j+1}}$, where
$E_{k_{s+1}}=E_1$. The connecting curves in domains $K_{i_j}$ are chosen as in 
Case 2 above. By attaching all red bridges to $S_2$ we obtain the original admissible set $S$, 
and by adding the corresponding closed curves to $\Ccal$ furnishes a homology basis of $S$.
(See Figure \ref{fig:bridges}.) 
By the construction, every red bridge (see Case 3) is contained in precisely one curve in 
the family $\Ccal$. The same is true for each of the arcs in Case 2. Hence, every curve in 
$\Ccal$ contains an arc which is disjoint from all other curve in $\Ccal$.

\begin{figure}[h]
\begin{center} 
	\includegraphics[scale=0.22]{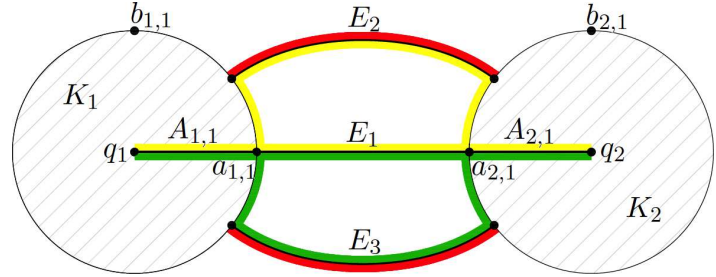} 
\end{center}
\caption{Two islands connected by three bridges}
\label{fig:bridges}
\end{figure}

Let $|\Ccal|$ denote the union of all curves in $\Ccal$.
By the construction, any point $p\in K_i\setminus |\Ccal|$ $(i=1,\ldots, m)$ can be connected by an arc in 
$K_i\setminus |\Ccal|$ to the point $b_{i,j}\in \Gamma_{i,j}\subset bK_i$ for some $j\in \{1,\ldots, m_i\}$. 
Hence, the set $|\Ccal|$ has no holes in $S$, so it is Runge. We can make $|\Ccal|$ connected
by modifying each closed curve $C\in \Ccal$ to pass through the vertex $q_1\in K_1$.
Indeed, every such curve $C$ passes through one of the vertices $q_i\in K_i$, so it suffices
to connect $q_i$ to $q_1$ as described in Case 3 above, using only black bridges when passing 
between different connected components of $K$.
\end{proof}

%
%
\subsection{Topological degree of a map}\label{ss:degree}
We recall the notion of the {\em topological degree} of a continuous map
which was defined and studied by L.\ E.\ J.\  Brouwer in 1911, \cite{Brouwer1911}.
For a modern treatment, see e.g.\ M.\ Hirsch \cite{Hirsch1976} or 
J.\ Milnor \cite{Milnor1997}. 

Let $f:M\to N$ be a smooth map between closed (compact without boundary)
connected oriented manifolds of the same dimension $n$. Then, every regular 
value $q\in N$ of $f$ has finitely many preimages, and its degree $\deg(f)$ is the 
signed number of points in the fibre $f^{-1}(q)$ taking into account the orientations.
(For nonorientable manifolds one can introduce the notion of degree modulo $2$.)
It turns out that this number is independent of $q\in N$. 
Furthermore, a pair of homotopic maps $M\to N$ have the same degree, 
so the degree can also be defined for continuous maps between compact topological manifolds. 

Let $D$ be a compact domain in $\R^n$ which is a topological manifold 
with coherently oriented boundary $bD$. Given a continuous map  $f:D\to\R^n$ and 
a point $q\in \R^n\setminus f(bD)$, we define
$
	\deg(f,q)\in\Z
$
as the topological degree of the map $\pi\circ f:bD\to S^{n-1}$, where 
$\pi:\R^n\setminus \{q\}\to S^{n-1}$ is the retraction $\pi(x)=\frac{x-q}{|x-q|}$ onto the sphere. 
If $f$ is smooth and $q$ is a regular value of $f$, then $\deg(f,q)$ is the signed 
number of points in the fibre $f^{-1}(q)$. This gives the following observation. 

%
%
\begin{proposition}\label{prop:degree}
Assume that $D$ is a compact domain in $\R^n$ which is a topological manifold with boundary $bD$. 
If $f:D\to \R^n$ is a continuous map such that $f(bD)\subset \R^{n}_*=\R^n\setminus \{0\}$ and the map 
$\frac{f}{|f|} :bD\to S^{n-1}$ has nonzero degree, then there is a point $p \in D$ with $f(p)=0$. 
In particular, if $0\in \mathring D$ and $f_t:D\to\R^n$ $(t\in [0,1])$ is a homotopy with $f_0=\Id_D$ 
such that $f_t(bD)\subset \R^n_*$ for all $t\in [0,1]$, then $0\in f_t(D)$ for all $t\in[0,1]$. 
\end{proposition}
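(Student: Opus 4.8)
The plan is to derive both assertions from the two properties of the topological degree recalled just before the statement: its homotopy invariance, and the fact that for a smooth map $f$ and a regular value $q$ the number $\deg(f,q)$ equals the signed count of points in $f^{-1}(q)$. The only nonformal ingredient I will need is the principle that a continuous map $bD\to S^{n-1}$ which extends continuously over $D$ has degree zero; everything else is bookkeeping.

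First I would prove the main assertion by contradiction. If $f(p)\neq 0$ for every $p\in D$, then $f$ maps $D$ into $\R^{n}_*=\R^n\setminus\{0\}$, so the composition with the radial retraction $\pi(x)=x/|x|$ gives a continuous map $\pi\circ f:D\to S^{n-1}$ extending $\pi\circ f|_{bD}$. To see that this forces $\deg(\pi\circ f|_{bD})=0$, I would use that $bD=\partial D$: in the long exact sequence of the pair $(D,bD)$ the fundamental class $[bD]\in H_{n-1}(bD)$ equals $\partial[D,bD]$ for the connecting homomorphism $\partial\colon H_n(D,bD)\to H_{n-1}(bD)$, hence $i_*[bD]=0$ in $H_{n-1}(D)$ for the inclusion $i\colon bD\hookrightarrow D$. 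Pushing forward by the extension then gives $(\pi\circ f|_{bD})_*[bD]=(\pi\circ f)_*\, i_*[bD]=0$ in $H_{n-1}(S^{n-1})\cong\Z$, so the degree vanishes, contradicting the hypothesis that $f/|f|\colon bD\to S^{n-1}$ has nonzero degree. Hence $f$ must vanish somewhere in $D$.

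Then I would treat the "in particular" clause. I would first record the normalization $\deg(\Id_D,0)=1$: since $0\in\mathring D$, the point $0$ is a regular value of the smooth map $\Id_D$ with the single preimage $0$, counted with positive sign, so the preimage-count formula gives degree $1$; equivalently, the radial map $bD\ni x\mapsto x/|x|$ has degree $1$. Next, because $f_t(bD)\subset\R^{n}_*$ for every $t\in[0,1]$, the family $t\mapsto \pi\circ f_t|_{bD}$ is a homotopy of maps $bD\to S^{n-1}$, so by homotopy invariance $\deg(\pi\circ f_t|_{bD})$ is independent of $t$ and equals its value at $t=0$, namely $\deg(\Id_D,0)=1\neq 0$. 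Applying the main assertion to each $f_t$ (with the point $0$) then yields $0\in f_t(D)$ for all $t\in[0,1]$.

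The one step needing a little care is the claim that a map extending over $D$ has vanishing boundary degree; this uses that $bD$ is a closed oriented $(n-1)$-manifold bounding the compact oriented manifold $D$ (orientability of $bD$ is automatic since $D\subset\R^n$, with $bD$ carrying the induced boundary orientation) together with the standard identity $\partial[D,bD]=[bD]$. I would also note that $bD$ may be disconnected, in which case "degree" means the sum of local degrees over the coherently oriented components; if one prefers to avoid this point, the normalization $\deg(\Id_D,0)=1$ can instead be obtained by excising a small ball $B(0,\epsilon)\subset\mathring D$ and applying the same extension-kills-degree principle to $x\mapsto x/|x|$ on $D\setminus B(0,\epsilon)$, whose boundary is $bD$ together with $S(0,\epsilon)$ with the opposite orientation. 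I do not expect a genuine obstacle here: the proposition is essentially a repackaging of classical Brouwer degree theory.
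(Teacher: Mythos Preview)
Your argument is correct: the contradiction via the extension-kills-boundary-degree principle for the first assertion, and homotopy invariance plus the normalization $\deg(\Id_D,0)=1$ for the second, are exactly the standard reductions. The paper does not actually supply a proof of this proposition---it is stated as an immediate ``observation'' following the recalled properties of the Brouwer degree---so your write-up fills in precisely the details the paper leaves implicit.
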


%
%
\subsection{Generators of $\Ascr^r(S)$}\label{ss:generators}
We shall need the following lemma. 

\begin{lemma}\label{lem:Arens}
Let $S$ be an admissible set in a Riemann surface (see Definition \ref{def:admissible}). 
Given functions $f_1,\ldots,f_m\in\Ascr^r(S)$ $(r\in\Z_+)$ without common zeros, 
there are functions $g_1,\ldots,g_m\in\Ascr^r(S)$ such that
\begin{equation}\label{eq:fg}
	f_1g_1+f_2g_2+\cdots + f_mg_m=1.
\end{equation}
\end{lemma}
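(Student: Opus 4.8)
The plan is to reduce the statement to two already-available ingredients: first a solution of the corresponding Bézout problem on an open neighbourhood of $S$ (where one has genuine holomorphic function theory on an open Riemann surface), and then the Mergelyan-type approximation theorem for functions of class $\Ascr^r(S)$. Concretely, I would argue as follows. By the Mergelyan theorem for admissible sets (\cite[Theorem 16]{FornaessForstnericWold2018}, quoted in the Introduction), each $f_j\in\Ascr^r(S)$ can be approximated in $\Cscr^r(S)$ by a function holomorphic on an open neighbourhood of $S$ in $M$; moreover, if $S$ has holes we may first pass to a regular neighbourhood $S_\epsilon$ as in \eqref{eq:Sepsilon} which deformation-retracts onto $S$, so without loss of generality we work on an open Riemann surface $U\supset S$. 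If the approximations $\tilde f_j$ are close enough to $f_j$ on $S$, the $\tilde f_j$ still have no common zero \emph{on $S$}, hence no common zero on some smaller open neighbourhood $U'$ of $S$; shrinking $U'$ we may assume $U'$ is itself an open Riemann surface on which the $\tilde f_j$ are holomorphic without common zeros.

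Next I would invoke the solution of the Bézout problem on a Stein manifold (equivalently, the surjectivity onto global sections coming from Cartan's Theorem B, or directly \cite[Corollary 4.4]{Forster1967MZ}/the fact that on a Stein space finitely many holomorphic functions without common zeros generate the unit ideal): there are $\tilde g_1,\ldots,\tilde g_m\in\Ocal(U')$ with $\sum_j \tilde f_j\tilde g_j\equiv 1$ on $U'$. Restricting to $S$ gives functions in $\Ascr^r(S)$, but with the wrong $f_j$'s. To correct this, set $h_j=f_j-\tilde f_j\in\Ascr^r(S)$, which is small in $\Cscr^r(S)$, and write
\[
	\sum_{j=1}^m f_j\tilde g_j = \sum_{j=1}^m (\tilde f_j+h_j)\tilde g_j = 1 + \delta,\qquad \delta:=\sum_{j=1}^m h_j\tilde g_j\in\Ascr^r(S).
\]
Since $\|h_j\|_{\Cscr^r(S)}$ can be made as small as we like and the $\tilde g_j$ are fixed once chosen, $\|\delta\|_{\Cscr^r(S)}<1$; because $\Cscr^r(S)$ (with $S$ compact) is a Banach algebra with respect to a suitable equivalent norm — or more elementarily, because $\Ascr^r(S)$ is closed under the operations used — the element $1+\delta$ is invertible in $\Ascr^r(S)$, its inverse being given by the convergent Neumann series $\sum_{k\ge 0}(-\delta)^k$; note $\Ascr^r(S)$ is closed in $\Cscr^r(S)$, so the limit again lies in $\Ascr^r(S)$. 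Then $g_j:=(1+\delta)^{-1}\tilde g_j\in\Ascr^r(S)$ satisfy $\sum_j f_j g_j=1$, as required.

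The one point needing a little care — and the place I expect the only real friction — is the claim that $\Ascr^r(S)$ is a Banach algebra in which "close to $1$" elements are invertible. The subtlety is that the naive norm $\|u\|=\sum_{|\alpha|\le r}\sup_S|\di^\alpha u|$ is not submultiplicative, so one should either rescale it to an equivalent submultiplicative norm (standard for $\Cscr^r$ of a compact set), or simply check by hand via the Leibniz rule that if $\|\delta\|_{\Cscr^r(S)}$ is small enough then the Neumann series for $(1+\delta)^{-1}$ converges in $\Cscr^r(S)$; either way one must also note that holomorphy in $\mathring S$ is preserved under uniform-on-compacts limits, so the sum stays in $\Ascr^r(S)$. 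Everything else is routine: the existence of the approximations $\tilde f_j$, the openness of the "no common zero" condition on the compact set $S$, and the Bézout identity on the Stein neighbourhood are all either quoted in the excerpt or classical.
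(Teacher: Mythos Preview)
Your proof is correct and runs parallel to the paper's, but with the roles reversed: the paper keeps the original $f_j$ fixed and instead approximates the \emph{solutions}. Concretely, the paper first invokes Arens's theorem on the maximal ideals of $\Ascr(S)$ (every maximal ideal is evaluation at a point) to solve $\sum f_jg_j=1$ with $g_j\in\Ascr^0(S)$, then approximates each $g_j$ by $\tilde g_j\in\Oscr(S)$ via Mergelyan, and finally divides through by the nonvanishing function $h=\sum_j f_j\tilde g_j\in\Ascr^r(S)$ to get $G_j=\tilde g_j/h$. Your route---approximate the data $f_j$ instead, solve B\'ezout on a Stein neighbourhood via Cartan's Theorem~B, then correct---trades Arens's theorem for the (arguably more mainstream) cohomological input. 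One simplification worth noting: your Neumann-series discussion is unnecessary, since a nowhere-vanishing element of $\Ascr^r(S)$ has its pointwise reciprocal in $\Ascr^r(S)$ directly (the $\Cscr^r$ part by the chain rule, holomorphy in $\mathring S$ trivially); this is exactly how the paper handles the analogous final step.
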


\begin{proof} For $r=0$ this follows from the result of R.\ Arens  \cite{Arens1958}
which states that every maximal ideal of the algebra $\Ascr(S)$ is given by the evaluation
at a point of $S$. (When $S=\overline\D$ is the closed unit disc in $\C$, this is a special case
of results of W.\ Rudin \cite{Rudin1957} who described closed ideals of the disc algebra $\Ascr(\cd)$.)
Hence, a collection of functions in $\Ascr(S)$ without a common zero spans $\Ascr(S)$, 
so \eqref{eq:fg} holds. (Arens's result applies in the more general situation when $S$ is a 
compact set in a Riemann surface $M$, $U$ is an  open set in $M$ contained in $S$, 
and $\Ascr(S,U)$ is the algebra of continuous functions on $S$
which are holomorphic on $U$. Here we are taking $U=\mathring S$.) 

Suppose now that $r>0$ and $f_1,\ldots,f_m\in\Ascr^r(S)$ have no common zeros.
Let $g_1,\ldots,g_m\in \Ascr(S)$ satisfy \eqref{eq:fg}. By Mergelyan's theorem
we can approximate each $g_j$ uniformly on $S$ by a function $\tilde g_j\in\Oscr(S)$. 
If the approximations are close enough then the function $h=\sum_{j=1}^m f_j\tilde g_j\in \Ascr^r(S)$ 
has no zeros on $S$, and the functions $G_j=\tilde g_j/h\in \Ascr^r(S)$ for $j=1,\ldots,m$
satisfy $\sum_{j=1}^m f_j G_j=1$.
\end{proof}

We shall need the following generalisation of Lemma \ref{lem:Arens}, analogous to
\cite[Lemma 2.1]{AlarconForstneric2019IMRN}. The proof given there applies verbatim if we
replace the use of Cartan's Theorems A and B over an open Riemann surface
by the corresponding results of Heunemann \cite{Heunemann1986I,Heunemann1986II}
for complex vector bundles of class $\Ascr(S)$ (see the proof of Lemma \ref{lem:normalbundle}).

\begin{lemma}\label{lem:GL}
Let $S$ be an admissible set in a Riemann surface $M$, and let $A$ be an  
$m\times p$ matrix-valued function on $S$, $1\le m<p$, of class $\Ascr^r(S)$
which has maximal rank $m$ at every point of $S$. Then there exists a map 
$B: S\to GL_p(\C)$ of class $\Ascr^r(S)$ such that $A(p)\cdotp B(p)=(I_m,0)$ 
holds for all $p\in S$, where $I_m$ is the $m\times m$ identity matrix. 
\end{lemma}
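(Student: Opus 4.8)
The plan is to follow the strategy of \cite[Lemma 2.1]{AlarconForstneric2019IMRN} and reduce the statement to a combination of two tools available over admissible sets: Lemma \ref{lem:Arens} (the Arens-type B\'ezout identity for $\Ascr^r(S)$) together with the Oka--Grauert principle and the Heunemann approximation theorem for complex vector bundles of class $\Ascr(S)$, which replace the use of Cartan's Theorems A and B that one would invoke over an open Riemann surface. The outcome will be produced by induction on the number of rows $m$ of the matrix $A$.

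First I would treat the base case $m=1$. Here $A=(f_1,\dots,f_p)$ is a row vector of functions in $\Ascr^r(S)$ with no common zero, so by Lemma \ref{lem:Arens} there are $g_1,\dots,g_p\in\Ascr^r(S)$ with $\sum_j f_j g_j=1$. Form the first column of $B$ from the $g_j$; then $A\cdot B$ has first entry $1$. To complete $B$ to an invertible matrix one needs the rest of the columns to span a complement of the line spanned by the first column, fibrewise over $S$, in a way that depends on $p\in S$ with class $\Ascr^r(S)$. This is exactly a statement about splitting the trivial rank-$p$ bundle $S\times\C^p$ as the sum of the (trivial, because it has a nowhere-vanishing section given by the $g$-column) line subbundle and a complement; by the Oka--Grauert principle both summands are trivial on a neighbourhood of $S$, and by Heunemann's theorem this splitting can be realised by sections of class $\Ascr^r(S)$, so a suitable $B\in\Ascr^r(S)$ with values in $GL_p(\C)$ exists. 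Right-multiplying by a further block of the form $\begin{pmatrix}1&*\\0&I\end{pmatrix}$ in $\Ascr^r(S)$ we may clear the remaining entries of the first row, achieving $A\cdot B=(1,0,\dots,0)$.

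For the inductive step, suppose the claim holds for $m-1$ rows. Given $A$ of maximal rank $m$, apply the $m=1$ case to the first row of $A$ to obtain $B_1\in\Ascr^r(S,GL_p(\C))$ with $A\cdot B_1$ having first row $(1,0,\dots,0)$. Then further right-multiply by a matrix of class $\Ascr^r(S)$ performing column operations that subtract multiples of the first column so as to also clear the first \emph{column} below the $(1,1)$ entry; this brings $A$ into the block form $\begin{pmatrix}1&0\\0&A'\end{pmatrix}$ where $A'$ is an $(m-1)\times(p-1)$ matrix of class $\Ascr^r(S)$ of maximal rank $m-1$ at every point. By the induction hypothesis there is $B'\in\Ascr^r(S,GL_{p-1}(\C))$ with $A'\cdot B'=(I_{m-1},0)$; taking $B$ to be the product of all the preceding factors with $\mathrm{diag}(1,B')$ gives $A\cdot B=(I_m,0)$, and $B$ is of class $\Ascr^r(S)$ with values in $GL_p(\C)$ as a product of such.

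The main obstacle is the bundle-theoretic step hidden in the base case: one must know that complex vector subbundles of class $\Ascr^r(S)$ in a trivial bundle over an admissible set can be holomorphically complemented and that the complements are trivial, and that the relevant sections can be chosen of class $\Ascr^r(S)$ rather than merely continuous. This is precisely what the Heunemann approximation theorem \cite{Heunemann1986I,Heunemann1986II} — used in the form recalled in the proof of Lemma \ref{lem:normalbundle}, with the simple proof in \cite[Theorem A.1]{DrinovecForstneric2007DMJ} — together with the Oka--Grauert principle provides; once these are invoked, everything else is a routine sequence of elementary row and column operations with entries in $\Ascr^r(S)$, exactly as in \cite[Lemma 2.1]{AlarconForstneric2019IMRN}.
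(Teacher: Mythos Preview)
Your proposal is correct and follows exactly the route the paper indicates: mimic \cite[Lemma 2.1]{AlarconForstneric2019IMRN}, replacing Cartan's Theorems A and B by Lemma \ref{lem:Arens} and the Heunemann/Oka--Grauert results for $\Ascr^r(S)$-bundles. There is one small slip in your inductive step: subtracting multiples of the first column from other columns cannot clear the entries of the first column below the $(1,1)$ position. The clean fix is to reverse the order---after reaching $\begin{pmatrix}1&0\\ c&A''\end{pmatrix}$, first apply the inductive hypothesis to $A''$ (right-multiply by $\mathrm{diag}(1,B')$) to obtain $\begin{pmatrix}1&0&0\\ c&I_{m-1}&0\end{pmatrix}$, and then clear $c$ by adding suitable multiples of columns $2,\ldots,m$ to column $1$, which is now a genuine column operation with entries in $\Ascr^r(S)$.
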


%
%

\section{Proof of Theorem \ref{th:Mergelyan}}\label{sec:proof}

Let $(X,\xi)$ be a complex contact manifold with $\dim X=2n+1\ge 3$, and let $\eta$ be a 
holomorphic $1$-form on $X$ with values in the normal line bundle $TX/\xi$ such that $\xi=\ker\eta$;  
see \eqref{eq:eta}. Assume that $S$ is an admissible set in a Riemann surface $M$ and 
$f\in \Ascr^{r+2}(S,X)$ $(r\ge 2)$ is an immersed $\xi$-Legendrian curve.
By Lemma \ref{lem:Steinnbd}, the graph
\[
	G_f=\{(\zeta,f(\zeta)):\zeta \in S\}\subset M\times X
\]
has an open Stein neighbourhood $\wt \Omega \subset M\times X$.
By Lemma \ref{lem:normalbundle}, $f$ extends to an immersion $F:S\times \B^{2n}\to X$ 
of class $\Ascr^{r+2}(S\times \B^{2n},X)$ whose graph is contained in $\wt \Omega$.
By standard results (see e.g.\ \cite[Lemma 4.3]{HormanderWermer1968} or 
\cite[Proposition 5.55]{CieliebakEliashberg2012}), $F$ extends to an immersion $F:U\times \B^{2n}\to X$ 
of class $\Cscr^{r+2}$, where $U\subset M$ is an open neighbourhood of $S$,
which is asymptotically holomorphic to order $r+1$ on $S\times \B^{2n}$, meaning that 
\begin{equation}\label{eq:dibarflat}
	D^{r+1}(\dibar F)=0 \ \ \text{holds on $S\times \B^{2n}$}.
\end{equation}
Here, $D^r$ denotes the total derivative of order $r$. In the case at hand, we can 
be more precise. Choosing a $\dibar$-flat extension of the map $f:S\to X$, we see from 
the proof of Lemma \ref{lem:normalbundle} that there is an $F$ as above such that, in addition
to \eqref{eq:dibarflat}, the map $F(x,\cdotp): \B^{2n}\to X$ is holomorphic for every $x\in U$. 

The holomorphic contact $1$-form $\eta$ on $X$ assumes values in the normal bundle $L=TX/\xi$
of the contact structure, which is a possibly nontrivial holomorphic line bundle. However, in our 
analysis we may assume that $L$ is trivial and $\eta$ is scalar-valued, which is seen as follows. 
By Lemma \ref{lem:Steinnbd} there are a Stein neighbourhood
$\wt \Omega\subset M\times X$ of the graph of the immersion $f:S\to X$ and 
a fibre-preserving biholomorphic map $\Phi:\Omega\to\wt\Omega\subset M\times X$ 
\eqref{eq:Phi} from a Stein domain $\Omega$ in $M\times \C^n$. 
Since $S$ is a compact set in an open Riemann surface, we can choose 
$\wt \Omega$ to have vanishing cohomology group $H^2(\wt\Omega,\Z)=0$.
By Oka's theorem (see \cite[Corollary 5.2.3]{Forstneric2017E}) this implies that every 
holomorphic line bundle on it is holomorphically trivial  \cite[Corollary 5.2.3]{Forstneric2017E}. 
By the proof of  Lemma \ref{lem:normalbundle}, the immersion $F:U\times \rho \B^{2n}\to X$ 
has range in $\wt \Omega$ and the claim follows.

Consider the 1-form
\begin{equation}\label{eq:beta}
	\beta = F^*\eta =\beta^{1,0}+\beta^{0,1}
\end{equation}
of class $\Cscr^{r+1}(U\times \B^{2n})$. Since $\eta$ is a holomorphic $1$-form, 
\eqref{eq:dibarflat} implies that the coefficients of the $(1,0)$-part $\beta^{1,0}$ 
are asymptotically holomorphic to order $r+1$ on $S\times \B^{2n}$, 
while the coefficients of $\beta^{0,1}$ vanish to order $r+1$ on $S\times \B^{2n}$. 
Let us call such a $1$-form {\em asymptotically holomorphic to order $r+1$} along $S\times \B^{2n}$.
For the same reason, since $d\eta$ is a holomorphic $2$-form, the differential 
$d\beta = F^*(d\eta)$ is a $2$-form of class $\Cscr^{r+1}(U\times \B^{2n})$ 
(there is no loss of derivatives) which is asymptotically holomorphic to order $r+1$ along $S\times \B^{2n}$. 
Finally, 
\[
	\beta\wedge(d\beta)^n=F^*(\eta\wedge (d\eta)^n) \ne 0
\] 
is a nowhere vanishing $(2n+1)$-form of class $\Cscr^{r+1}(U\times \B^{2n})$ which 
which is asymptotically holomorphic to order $r+1$ along $S\times \B^{2n}$. 
In this sense, $\beta$ is a complex contact form on $S\times \B^{2n}$. 
(Note that $\beta$ is a holomorphic contact form on  $\mathring S\times  \B^{2n}$.
A more precise treatment of asymptotically holomorphic contact forms can be
found in \cite[Sect.\ 3]{Forstneric2020JSG}; see in particular \cite[Lemma 3.2]{Forstneric2020JSG}.)

%
%
We shall need the following partial normal form of $\beta$ along $S\times \{0\}^{2n}$.
    
%
%
\begin{lemma}\label{lem:NF}
Let $\beta$ be as in \eqref{eq:beta}, and let $z:M\to\C$ be a holomorphic immersion.
There are fibre coordinates $\zeta=(w,y,\zeta')$ with $\zeta'=(\zeta_3,\cdots,\zeta_{2n})$ on $S\times \rho\B^{2n}$
and a nowhere vanishing function $h\in \Ascr^{r-1}(S\times \rho\B^{2n})$ for some $0<\rho<1$ such that 
\begin{equation}\label{eq:beta-normal}
	\frac{1}{h}\beta  = dw -  y dz + \sum_{j,k=3}^{2n} c_{j,k}\zeta_k  \, d\zeta_j  + \wt \beta
	\quad \text{on}\ S\times \rho\B^{2n},
\end{equation}
where $c_{j,k}\in\Ascr^{r-1}(S)$ and $\wt \beta$ is a $1$-form of class $\Ascr^{r-1}$ 
containing terms $y d\zeta_j$ and $\zeta_j dy$ for $j=2,\ldots,2n$, terms $O(|\zeta|) dw$, and $O(|\zeta|^2) dz$. 
The change of coordinates which brings $\beta$ in this form is a fibre-preserving transformation 
of class $\Ascr^{r}$ on a neighbourhood of $S\times \{0\}^{2n}$ in $S\times \C^{2n}$ which is $\C$-linear 
in the fibre variable and keeps $S\times \{0\}^{2n}$ fixed. 
\end{lemma}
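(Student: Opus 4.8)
The plan is to reduce the $1$-form $\beta$ to the stated normal form by a finite sequence of fibre-preserving linear changes of coordinates, closely following the normal-form procedure of Alarc\'on--Forstneri\v c \cite[Theorem 1.1 and its proof]{AlarconForstneric2019IMRN}, while being careful about the loss of regularity. Write the fibre variable as $\zeta=(\zeta_1,\ldots,\zeta_{2n})\in\rho\B^{2n}$ and restrict attention to a neighbourhood of $S\times\{0\}^{2n}$. Since $\beta$ is a contact form, the restriction $\beta|_{S\times\{0\}^{2n}}$ is the pull-back by $f$ of $\eta$, which vanishes because $f$ is $\xi$-Legendrian; hence $\beta$ has no term in $dz$ with a nonvanishing coefficient at $\zeta=0$, and the coefficient of each $d\zeta_j$ at $\zeta=0$ is a function on $S$. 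First I would use the nondegeneracy of $d\beta$ along $S\times\{0\}^{2n}$ to pick out one fibre coordinate, call it $w$, whose differential appears in $\beta$ with a nonvanishing coefficient $h_0\in\Ascr^{r-1}(S)$ on $S\times\{0\}^{2n}$ (the coefficients are only of class $\Ascr^{r-1}$ because, after the reduction in Lemma \ref{lem:normalbundle}, $F$ and hence $\beta$ has coefficients of class $\Cscr^{r+1}$, and extracting the needed frame costs differentiations — this is exactly where the two derivatives are lost, matching the remark after Theorem \ref{th:Mergelyan}). Dividing $\beta$ by this coefficient $h$ (which I extend to a nowhere vanishing $h\in\Ascr^{r-1}(S\times\rho\B^{2n})$ for small $\rho$) I may assume the coefficient of $dw$ is $1$.

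Next I would use Lemma \ref{lem:GL} to make a $\C$-linear, fibre-preserving change of the remaining coordinates so that, after a Taylor expansion in $\zeta$ to first order, $\beta$ takes the shape $dw - y\,dz + \sum_{j=3}^{2n} a_j(\zeta)\,d\zeta_j + (\text{terms involving } y)$, where $y=\zeta_2$ is the coordinate conjugate to $z$ in the symplectic reduction of $d\beta$ transverse to $dw$. Concretely, the $2$-form $d\beta$ restricted to the hyperplane $\{dw=0\}$ along $S\times\{0\}^{2n}$ is a $\Ascr^{r-1}$ family of nondegenerate antisymmetric forms on a rank-$2n$ bundle over $S$; choosing a Darboux frame for it fibrewise over $S$ — which again can be done in class $\Ascr^{r-1}$ using Lemma \ref{lem:GL} and the $\Ocal(M)$- (here $\Ascr(S)$-) triviality of the relevant bundles over the admissible set $S$ — singles out the pair $(z,y)$ giving the term $-y\,dz$ and a symplectic complement with coordinates $\zeta'=(\zeta_3,\ldots,\zeta_{2n})$. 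In those coordinates $d\beta \equiv -dy\wedge dz + \sum_{j,k} b_{j,k}\,d\zeta_j\wedge d\zeta_k \pmod{\zeta,\,dw}$, and integrating this relation (the $d$ of the desired expression must reproduce $d\beta$ to the appropriate order) forces $\beta = dw - y\,dz + \sum_{j,k=3}^{2n} c_{j,k}\zeta_k\,d\zeta_j + \wt\beta$ with $c_{j,k}\in\Ascr^{r-1}(S)$ and with $\wt\beta$ collecting precisely the error terms listed: the $y\,d\zeta_j$ and $\zeta_j\,dy$ terms (from the $y$-dependence that the Darboux normalisation does not kill at first order), the $O(|\zeta|)\,dw$ terms (higher-order corrections to the coefficient of $dw$, absorbed after dividing by $h$ up to this order), and the $O(|\zeta|^2)\,dz$ terms (the coefficient of $dz$ vanishes to first order because its linear part is exactly $-y$).

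Finally I would check that every transformation used — dividing by $h$, and the two applications of Lemma \ref{lem:GL} (one to fix $w$, one to put the transverse $2$-form in Darboux form) — is fibre-preserving, $\C$-linear in $\zeta$, of class $\Ascr^{r}$ on a neighbourhood of $S\times\{0\}^{2n}$ (the map is $\Ascr^r$ even though its effect on $\beta$ produces $\Ascr^{r-1}$ coefficients, since differentiating $\beta$ once in applying $d$ is what drops the index, whereas the coordinate change itself is one degree better), and fixes $S\times\{0\}^{2n}$. Shrinking $\rho\in(0,1)$ at the end guarantees all the expansions converge on $S\times\rho\B^{2n}$ and that $h$ stays nowhere vanishing there. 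The main obstacle I anticipate is the bookkeeping of regularity: showing that the Darboux-type frame for the $\Ascr^{r-1}$ family of symplectic forms on the transverse bundle can be chosen in class $\Ascr^{r-1}$ (this is where Lemma \ref{lem:GL} over the admissible set $S$ does the real work, replacing the usual partition-of-unity or Moser-deformation argument, which is unavailable here), and carefully verifying that all the error terms genuinely land in the three allowed types rather than producing, say, a nonvanishing $\zeta_j\,dz$ term at first order — which is ruled out precisely by the Legendrian condition and the Darboux normalisation of $d\beta$.
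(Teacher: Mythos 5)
Your plan has the same skeleton as the paper's proof: two fibre-linear changes of frame produced by Lemma \ref{lem:GL}, division by the resulting coefficient $h$ of $dw$, and a derivative lost at each frame change because the differentials $db_{i,j}=b'_{i,j}\,dz$ of the matrix entries feed back into the coefficient of $dz$. The first step is exactly the paper's: apply Lemma \ref{lem:GL} to the nowhere-vanishing covector $(a_1,\ldots,a_{2n})$ with $\beta|_{S\times\{0\}^{2n}}=\sum_j a_j\,d\zeta_j$ (no $dz$ term by the Legendrian condition) to reduce it to $dw$. Where you deviate is the second step. The paper does something strictly smaller than a Darboux normalisation of $d\beta|_{\{dw=0\}}$: after dividing by $h$ one has $\frac{1}{h}\beta=dw+\bigl(\sum_{j\ge2}b_j\zeta_j\bigr)dz+\sum_{j,k\ge2}c_{j,k}\zeta_k\,d\zeta_j+\cdots$, and one only normalises the row vector $(b_2,\ldots,b_{2n})$ — which is nonvanishing because at a common zero $\beta\wedge(d\beta)^n$ would have no $dz$ factor, contradicting the \emph{contact} condition (not the Legendrian condition, to which you partly attribute it) — via a second direct application of Lemma \ref{lem:GL}, turning the $dz$-coefficient into $-\zeta_2=-y$. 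The quadratic block $\sum_{j,k\ge3}c_{j,k}\zeta_k\,d\zeta_j$ is left completely arbitrary; that is why the lemma is only a \emph{partial} normal form and why the loss stops at two derivatives (see Remark \ref{rem:normalisation}).

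The one point you should repair before executing the plan: Lemma \ref{lem:GL} normalises a maximal-rank rectangular matrix to $(I_m,0)$; it does not produce a Darboux frame for an $\Ascr^{r-1}$ family of antisymmetric forms on a rank-$2n$ bundle over $S$, so the sentence claiming the transverse symplectic form can be put in Darboux form ``using Lemma \ref{lem:GL}'' is not justified as written. You would need a fibrewise symplectic Gram--Schmidt of class $\Ascr(S)$, which costs further derivatives and is exactly what the lemma is designed to avoid. Fortunately this step is unnecessary: only the coefficient of $dz$ needs normalising, and that is a legitimate GL application. Dropping the Darboux step makes your argument coincide with the paper's.
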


%
%
\begin{remark}\label{rem:normalisation}
Over the interior of $S$ where $\beta$ is a holomorphic contact form, it has a full 
Darboux-type holomorphic normalisation along $\mathring S\times \{0\}^{2n}$ of the form
\[
	\beta  = dw - ydz  - \sum_{i=2}^n y_i dx_i,
\]
where $\zeta=(w,y,x_2,y_2,\ldots,x_n,y_n)$ are fibre coordinates;
see \cite[Theorem 1.1]{AlarconForstneric2019IMRN}. Here we are working with a contact 1-form 
of finite degree of smoothness and each step of the normalisation procedure drops the degree by one, 
so we stop after the second step in order to only lose two derivatives.
This suffice for the proof of Theorem \ref{th:Mergelyan}.
\qed\end{remark}

\begin{proof}
Let $p$ denote a point in $S$, and let $\zeta=(\zeta_1,\ldots,\zeta_{2n})$ be complex 
coordinates on $\C^{2n}$, called fibre coordinates. 
Along the $\beta$-Legendrian curve $S\times \{0\}^{2n}=\{\zeta=0\}$ we have 
\[
	\beta(p,0)=\sum_{j=1}^{2n} a_j(p) d\zeta_j,\quad\  p\in S
\]
for some functions $a_j\in\Ascr^{r+1}(S)$ without common zeros. The $1$-form $dz$ 
does not appear in the above expression since $S\times \{0\}^{2n}$ 
is a $\beta$-Legendrian curve.

Let $a=(a_1,\ldots,a_{2n}): S \to \C^{2n}\setminus \{0\}$. Lemma \ref{lem:GL} furnishes a map
$B : S \to GL_{2n}(\C)$ of class $\Ascr^{r+1}(S)$ satisfying $a(p)\cdotp B(p)=(1,0,\ldots,0)$ 
for all $p\in S$. We introduce new fibre coordinates 
\[
	\zeta'=B(p)^{-1}\zeta,\quad\  p\in S,\ \zeta\in\C^{2n}.
\]
This transforms $\beta$ along $S\times \{0\}^{2n}$ to the constant $1$-form $d\zeta'_1$. 
Dropping the primes and denoting the variable $\zeta_1$ by $w$, we thus have  
\[
	\beta=dw \ \ \text{along}\ \ S\times \{0\}^{2n}.
\]
This change of coordinates is of class $\Ascr^{r+1}$, but the differentials
of the component functions $b_{i,j}(p)$ of the matrix $B(p)$ contribute terms
$db_{i,j}=b'_{i,j}dz$ with $b'_{i,j}\in \Ascr^r(S)$, so we lose one derivative and the new
1-form $\beta$ is of class $\Ascr^r$. 

Since the coefficient of $dw$ in $\beta$ equals $1$ on $S\times \{0\}^{2n}$, it is a nowhere vanishing 
function $h\in \Ascr^{r}(S\times \rho \B^{2n})$ for some $\rho>0$, and we have that
\[ 
	\frac{1}{h}\beta=dw+ \biggl(\sum_{j=2}^{2n} b_j \zeta_j \biggr) dz 
	+ \sum_{j,k=2}^{2n} c_{j,k}\zeta_k\, d\zeta_j + \wt \beta,
\] 
where $b_j,c_{j,k}\in \Ascr^{r}(S)$ and the remainder $\wt \beta$ contains terms of the type
described in the lemma. We claim that the functions $b_2,\ldots,b_{2n}$ in the coefficient of $dz$ 
have no common zeros on $S$. Indeed, since $h=1$ on $S\times \{0\}^{2n}$, 
at a common zero $p_0\in S$ of these functions the $2$-form $d\beta$ at the point $(p_0,0)$
does not contain the term $dz$ and hence $\beta \wedge(d\beta)^n$ vanishes, a contradiction. 
Write $\zeta'=(\zeta_2,\ldots,\zeta_{2n})$. Lemma \ref{lem:GL} applied to the map 
$(b_2,\ldots,b_{2n}) : S\to \C^{2n-1} \setminus\{0\}$ gives a change of coordinates 
\[
	(p,w,\zeta')\mapsto (p,w,\wt B(p)\zeta'), \quad\  \wt B(p)\in GL_{2n-1}(\C)
\]
of class $\Ascr^{r}(S)$ such that the coefficient of $dz$ becomes $-\zeta_2$, and hence 
\begin{equation}\label{eq:Taylor2}
	\frac{1}{h} \beta= dw - \zeta_2 dz +  \sum_{j,k=2}^{2n} c_{j,k}\zeta_k \, d\zeta_j + \tilde \beta
\end{equation}
for some new functions $c_{j,k}\in \Ascr^{r}(S)$. Renaming the variable $\zeta_2$ to $y$
and moving the terms  in \eqref{eq:Taylor2} containing $\zeta_j d\zeta_2$ or $\zeta_2d\zeta_j$ with $j> 2$
to the remainder $\wt \beta$ gives \eqref{eq:beta-normal}.
Note that $\wt \beta$ is now of class $\Ascr^{r-1}(S\times\C^{2n})$ since the last change of 
coordinates contributes terms coming from the differentials of the components of $\wt B(p)$.
\end{proof}

%
%
%

Precomposing the immersion $F:S\times \B^{2n}\to X$ by the change of coordinates 
furnished by Lemma \ref{lem:NF} gives a new immersion $F:S\times \rho\B^{2n}\to X$ of class
$\Ascr^r(S\times \rho\B^{2n}, X)$ for some $0<\rho<1$ which agrees with $f$ on 
$S\times \{0\}^{2n}$ such that the contact $1$-form $\frac{1}{h} F^*\eta$ of class $\Cscr^{r-1}$
is asymptotically holomorphic to order $r-1$ on $S\times \rho\B^{2n}$ and of
the form \eqref{eq:beta-normal}. Under this assumption we now prove Theorem \ref{th:Mergelyan}.

%
%
%
%
\begin{proof}[Proof of Theorem \ref{th:Mergelyan}]
We first prove the theorem in the special case when $S$ is a 
compact connected domain in $M$ (i.e., there are no attached arcs)
and without the interpolation conditions. The general case will be considered afterwards.

Let $\Ccal=\{C_1,\ldots, C_l\}$ be a homology basis of $S$ furnished by 
Lemma \ref{lem:homology-basis-admissible}, consisting of piecewise 
smooth oriented Jordan curves with a common base point $p_0\in \mathring S$,
such that $C=\bigcup_{i=1}^l C_i$ is $\Oscr(S)$-convex and each curve $C_i\in \Ccal$ 
contains a nontrivial arc $I_i$ disjoint from $\bigcup_{j\ne i} C_j$.
Let $z:M\to\C$ be the holomorphic immersion chosen at the beginning of the section such that 
\eqref{eq:beta-normal} holds. As in \cite[Sect.\ 4]{AlarconForstnericLopez2017CM} we find a 
holomorphic spray of functions 
\begin{equation}\label{eq:y1}
	y(p,t)= \sum_{i=1}^l t_i \xi_i(p),\quad\  p\in S,\ t=(t_1,\ldots,t_l)\in\C^l,
\end{equation}
where $\xi_i\in \Oscr(S)$ are holomorphic functions satisfying
\begin{equation}\label{eq:periods}
	\int_{C_i} \xi_j dz = \delta_{i,j}, \quad\  i,j=1,\ldots,l. 
\end{equation}
(Here, $\delta_{i,j}$ denotes the Kronecker delta.) Inserting the values
\begin{equation}\label{eq:specialvalues}
	y=y(p,t),\quad \zeta_i=0 \ \ \text{for\ $i=3,\ldots,2n$} 
\end{equation}
into the $1$-form $\alpha=dw-ydz$ (see \eqref{eq:beta-normal}) gives the equation 
\begin{equation}\label{eq:dwisydz}
	dw = y(p,t) dz,\quad\ p\in S
\end{equation}
whose solutions $w=w(p,t)$ are $\alpha$-Legendrian curves. Since the variable $w$ does
not appear on the right hand side, the solutions are obtained by integration:
\begin{equation}\label{eq:wpt}
	w(p,t)= w_0 + \int_{p_0}^p y(\cdotp,t)dz 
	= w_0 + \sum_{i=1}^l t_i \int_{p_0}^p \xi_i dz, \quad\ p\in S.
\end{equation}
From \eqref{eq:y1} and \eqref{eq:dwisydz} it follows that any solution satisfying the initial 
condition $w(p_0,t)=0$ also satisfies an estimate
\begin{equation}\label{eq:estimate-w}
	|w(p,t)| = O(|t|), \quad\  p\in S,
\end{equation}
provided that in \eqref{eq:wpt} we integrate along an approximately geodesic curve in $S$ 
from $p_0$ to $p$. (The integral may of course depend on the choice of the curve
due to nontrivial periods.)
Using the notation \eqref{eq:period}, we see from \eqref{eq:periods} that the period map
of the solution \eqref{eq:wpt} along the curve $C_i$ equals
\[
	\Pscr_{C_i}(p_0,w_0,t) = t_i,\quad\  i=1,\ldots,l.
\]
Hence, the period map $\C^l\ni t\mapsto \Pscr^\alpha(t)\in \C^l$ of the family of solutions \eqref{eq:wpt} 
associated to the 1-form $\alpha=dw-ydz$ and the spray \eqref{eq:y1} is the identity map 
\[
	\Pscr^\alpha(t)=\left(\Pscr^\alpha_{C_1}(t),\ldots,\Pscr^\alpha_{C_l}(t)\right) =t.
\] 
In particular, the only single-valued $\alpha$-Legendrian curve in 
this family satisfying the initial condition $w(p_0,t)=0$ is $w=0$ for the parameter value $t=0$. 

Inserting the values \eqref{eq:specialvalues} into the $1$-form $\frac{1}{h}\beta$ 
\eqref{eq:beta-normal}, the second term on the right-hand side vanishes,
while Lemma \ref{lem:NF} shows that the only nonvanishing terms in the remainder $\wt \beta$ 
are those of the form $wdy$ and $ydy$, possibly multiplied by other normal coordinates and by 
functions in $\Ascr^{r-1}(S)$. We see from \eqref{eq:y1} and \eqref{eq:estimate-w} that these
terms disturb the period map of solutions of the resulting differential equation 
for $\beta$-Legendrian curves only by a term of size $O(|t|^2)$. Hence, the period map of 
the $\beta$-Legendrian curve satisfying the initial condition $w(p_0,t)=0$  equals
\[
	\Pscr^{\beta}(t)=t+O(|t|^2).
\] 
(In fact, a weaker estimate $\Pscr^{\beta}(t)=t+o(|t|)$ suffices in the sequel.)

For every small $\delta>0$ the map $\Pscr^{\beta}(t)$ is close enough to $t$ on the closed polydisc
\begin{equation}\label{eq:polydisc}
	P_\delta =\{(t_1,\ldots,t_l):|t_i|\le \delta,\  i=1,\ldots,l\} = \delta \overline\D^{l}\subset \C^l
\end{equation}
that it maps $bP_\delta$ to $\C^l_*$ and this map has degree one. 
(Since $bP_\delta$ is homotopic to the sphere $S^{2l-1}$ in $\C^l_*$ and $\C^l_*$ 
retracts onto $S^{2l-1}$, the degree is well defined, cf.\ Subsection \ref{ss:degree}.)

We now fix $\delta>0$; however, its value will be determined only later.

Let $F:S\times \rho\B^{2n}\to X$ be the immersion constructed above 
so that $F(\cdotp,0)=f$ and $\frac{1}{h}F^*\eta=\frac{1}{h}\beta$  
is of the form \eqref{eq:beta-normal}. After decreasing $\rho>0$ slightly, 
we can approximate $F$ as closely as desired in the $\Cscr^r(S\times \rho\B^{2n},X)$ 
topology by a holomorphic immersion $G:U\times \rho\B^{2n}\to X$, where $U\subset M$ 
is a neighbourhood of $S$. Let us explain this.
By the construction, the graph of $F$ is contained in a Stein domain $\wt\Omega\subset M\times X$. 
Hence, using a holomorphic embedding of $\wt\Omega$ into a Euclidean space $\C^N$ 
and an ambient holomorphic retraction back to this embedded submanifold, 
the proof reduces to approximation of functions in $\Ascr^r(S\times \rho\B^{2n})$ 
by holomorphic functions in a neighbourhood (decreasing $\rho>0$ slightly). 
For the details, see \cite[Sect.\ 7.2, Lemma 3]{FornaessForstnericWold2018}. 
In order to approximate a function in $\Ascr^r(S\times \rho\B^{2n})$, we consider 
its Taylor series expansion in the fibre variables, 
\[
	(p,z) \mapsto \sum_{I\in \Z_+^{2n}} a_I(p) z^I 
\]
with coefficients $a_I\in \Ascr^r(S)$. It remains to approximate the coefficients $a_I$
in the  $\Ascr^r(S)$ by functions $\tilde a_I\in\Oscr(S)$; this is accomplished by
\cite[Theorem 16]{FornaessForstnericWold2018}.

Suppose now that $G:U\times \rho\B^{2n}\to X$ is a holomorphic immersion
approximating $F$ in $\Cscr^r(S\times \rho\B^{2n},X)$. Recall that $\eta$ is the contact form in $X$. The pullback 
\begin{equation}\label{eq:tildebeta}
	  \gamma  := G^*\eta 
\end{equation}
is then a holomorphic contact form on $U\times \rho \B^{2n}$ which is $\Cscr^{r-1}$-close to $\beta$
on $S\times \rho\B^{2n}$. Furthermore, the coefficient $g \in\Oscr(U\times \rho \B^{2n})$
of the differential $dw$ in $\gamma$ is close to the corresponding coefficient $h$ of $\beta$
on $S\times \rho\B^{2n}$ and hence is nonvanishing, perhaps after shrinking $U\supset S$ 
and decreasing $\rho>0$ slightly. The holomorphic contact form $\frac{1}{g}\gamma$
on $U\times \rho \B^{2n}$ is then $\Cscr^{r-1}$ close to the $1$-form $\frac{1}{h}\beta$ 
\eqref{eq:beta-normal} on $S\times  \rho \B^{2n}$.

We now insert the values \eqref{eq:specialvalues} into the contact form $\frac{1}{g}\gamma$ and denote 
by $t\mapsto \Pscr^{\gamma}(t)$ the corresponding period map for solutions satisfying 
the initial condition $w(p_0,t)=0$. Assuming that the approximations are close enough, 
the period map $\Pscr^{\gamma}(t)$ is so close to $\Pscr^{\beta}(t)=t+O(|t|^2)$ uniformly
on the polydisc $P_\delta\subset\C^l$ (see \eqref{eq:polydisc}) that it maps 
$bP_\delta$ to $\C^l_*$ and this map has degree one. By Proposition \ref{prop:degree}
there is a point $t^0\in \mathring P_\delta$ such that 
\[
	\Pscr^{\gamma}(t^0)=0. 
\]
For $t=t^0$, the solution of the differential equation for $\gamma$-Legendrian curves 
satisfying the initial condition $w(p_0,t^0)=0$ has vanishing periods over the curves $C_1,\ldots, C_l$ 
in the homology basis of $S$. Assuming that the number $\delta>0$ was chosen small enough
and the approximations were close enough, we obtain an embedded holomorphic 
$\gamma$-Legendrian curve on a neighbourhood of $S$ in $M$ (see Subsect.\ \ref{ss:ODE}) 
which is $\Cscr^r$-close to the initial $\beta$-Legendrian curve $S\times \{0\}^{2n}$.
(Note that one derivative is gained when integrating the differential equation.)
Its image by $G$ is a holomorphic $\xi$-Legendrian immersion $\tilde f:U\to X$ 
which approximates the initial Legendrian immersion $f:S\to X$ in $\Cscr^r(S,X)$. 

This completes the proof of Theorem \ref{th:Mergelyan} in the special case.

%
%
Consider now the general case when $S=K\cup E$ is an admissible set (see Definition \ref{def:admissible}). 
Without a loss of generality we shall assume that $S$ is connected.
Let $K=\bigcup_{i=1}^m K_i$, where $K_1,\ldots,K_m$ are the connected components of $K$. 
Let $\Ccal$ denote a homology basis of $S$ furnished by Lemma \ref{lem:homology-basis-admissible}. 
Recall that all curves $C\in \Ccal$ are based at the vertex $q_1\in K_1$. 
We enlarge the finite set $A\subset S$ (at which we shall interpolate)
by adding to it the endpoints of all arcs $E_i \subset E$ (the connected components of $E$) and
the endpoints of all arcs $A_{i,j}$ chosen in the proof of Lemma \ref{lem:homology-basis-admissible}.
(The latter set includes all vertices $q_i\in K_i$.)

We then form a family $\Cscr$ of arcs and closed curves in $S$ as follows. 

\begin{enumerate} [\rm (a)]
\item 
If a curve $C\in \Ccal$ does not contain any points of $A$ except $q_1$, we put it in $\Cscr$.
Otherwise, we split $C$ into the union of finitely many arcs lying back to back, 
with the points of $A\cap C$ as the common endpoints of adjacent arcs, and 
we put all these arcs in $\Cscr$.  
\smallskip
\item 
If $E_k$ is a connected component of $E$ which is not contained in any 
of the curves from the previous item, we connect each
endpoint of $E_k$ contained in a connected component $K_{i}$ of $K$ to the vertex
$q_i$ as described in Case 2 in the proof  of Lemma \ref{lem:homology-basis-admissible}
(first going to a suitable point $a_{i,j}$ and then going along $A_{i,j}$ to $q_i$).
We then split the resulting curve into arcs at the points of $A$ as in the previous case
and put all these arcs in $\Cscr$. 

\smallskip
\item 
Let $A'$ denote the set of points $a\in A$ belonging to at least one curve in the family $\Cscr$ 
constructed thus far. Any remaining point $a\in A\setminus A'$ lies in one of the components 
$K_i$ of $K$. Choose an embedded arc $\Lambda_a \subset K_i$ 
connecting $a$ to the vertex $q_i$ such that $\Lambda_a$
does not meet any of the arcs from $\Cscr$ other than at $q_i$. 
We put the arcs $\Lambda_a$ for $a\in A\setminus A'$ 
in the family $\Cscr$ whose construction is now complete.
\end{enumerate}

Note that the union $\wt C$ of all curves in the family $\Cscr$ is a connected Runge set in $S$.

Let $y(p,t)$ be a spray \eqref{eq:y1} where the functions $\xi_i\in \Oscr(S)$ 
satisfy conditions \eqref{eq:periods} on the curves in the family $\Cscr$.
As in the special case considered above, we approximate $F$ in the $\Cscr^r(S\times \C^{2n})$
topology by a holomorphic immersion $G:U\times \rho\B^{2n}\to X$ from a neighbourhood 
of $S\times \{0\}^{2n}$ into $X$ which agrees with $F$ at the finitely many points $A\times \{0\}^{2n}$,
and let $\gamma=G^*\eta$ \eqref{eq:tildebeta}. Let 
$g\in\Oscr(U\times \rho \B^{2n})$ denote the nonvanishing coefficient of $dw$ in $\gamma$.
We insert the values $y=y(p,t)$ and $\zeta_i=0$ for $i=3,\ldots,2n$ (see \eqref{eq:specialvalues})
into the equation $\frac{1}{g}\gamma=0$ for $\gamma$-Legendrian curves.
Set $p_0:=q_1\in A$ (recall that $q_1$ is the vertex of $K_1$). 
Assume that $\Cscr$ contains $l$ curves (removing the repetitions).
For each $C\in\Cscr$ choose a regular parameterisation by a function 
$\lambda(s)$, $s\in [0,1]$. For $t\in\C^l$ sufficiently close to $0$ we define
\[
	\Pscr_C^{\gamma}(t) = w(1,t) \in \C,
\] 
where $[0,1]\ni s\mapsto w(s,t)$ is the unique solution on $C$ of the differential equation for 
$\gamma$-Legendrian curves with the initial value $w(0,t)=0$. 
(Compare with \eqref{eq:Jordancurve} and \eqref{eq:period}.)
For a small enough $\delta>0$ this defines the period map $t\mapsto \Pscr^{\gamma}(t)\in \C^l$ 
associated to $\Cscr$, with $t\in P_\delta\subset \C^l$ (see \eqref{eq:polydisc}).
Assuming as we may that the approximations were close enough, the same argument 
as in the special case considered above gives a value $t^0\in \mathring P_\delta$ such that 
$\Pscr^{\gamma}(t_0)=0$. Since the union $\wt C$ of all curves in the family $\Cscr$ is connected, 
it follows that the solution of the differential equation for $\tilde \beta$-Legendrian curves with $t=t^0$ 
and satisfying the initial condition $w(p_0,t^0)=0$ is single-valued on $\wt C$ and it vanishes 
at all points in $A$. Assuming that $\delta>0$ 
was chosen small enough and the approximation of $\beta$ by $\gamma$ was close enough, 
we obtain a single-valued holomorphic solution on a neighbourhood of $S$ in $M$ vanishing
at all points of $A$. (See Subsect.\ \ref{ss:ODE}.)
Its image by $G$ is a holomorphic $\xi$-Legendrian immersion $\tilde f:U\to X$ from a neighbourhood 
of $S$ which approximates the initial Legendrian immersion $f:S\to X$ in $\Cscr^r(S,X)$ 
and agrees with $f$ at the points of $A$. Clearly, this method gives interpolation to any
given finite order at  the points in $A\cap \mathring S$ provided we choose $G$ to match
$F$ to a suitable finite order at the points in $(A\cap \mathring S)\times \{0\}^{2n}$.
The latter condition is a standard addition to the Mergelyan approximation theorem.
\end{proof}

%
%
%
%
\section{Proof of Theorem \ref{th:soft}}\label{sec:soft}

By the general position theorem for Legendrian immersions 
(see \cite[Theorem 1.2]{AlarconForstneric2019IMRN}) and shrinking the open set $U$ around $K$
if necessary, we may assume that $f:U\to X$ is a Legendrian embedding. 

Theorem \ref{th:soft} is proved by inductively applying the Mergelyan
approximation theorem for Legendrian immersions, given by Theorem \ref{th:Mergelyan}, and the
procedure described (for Stein manifolds of any dimension $n\ne 2$) 
in \cite[Proof of Theorem 1.2]{ForstnericSlapar2007MZ}. We provide the outline.

Choose a strongly subharmonic Morse exhaustion function $\rho : M\to \R_+$ and an increasing sequence
$0<c_0<c_1<c_2\cdots$ of regular values of $\rho$ with $\lim_{j\to\infty} c_j=+\infty$
such that, setting $M_j=\{\rho<c_j\}$ for $j\in\Z_+$, we have that
$K\subset M_0\subset \overline M_0\subset U$ and for each $j>0$ the function $\rho$ has 
at most one critical point $p_j$ in $M_j \setminus M_{j-1}$. Fix $\epsilon>0$ and set 
$\epsilon_0=\epsilon/2$, $W_0=M_0$, and $h_0=\Id_M$. 
We inductively construct an increasing sequence of smoothly bounded 
relatively compact domains $W_0\Subset W_1\Subset W_2\Subset \cdots$ in $M$ (not necessarily exhausting $M$),  
a sequence of continuous maps $f_j: M\to X$, a sequence of diffeomorphisms $h_j: M\to M$,
and a decreasing sequence of numbers $\epsilon_j>0$
such that the following conditions hold for all $j=1,2,\ldots$.
\begin{enumerate}[\rm (i)]
\item The compact set $\overline W_{j-1}$ is $\Oscr(W_j)$-convex. 
\item The map $f_j$ is a holomorphic Legendrian embedding on a neighborhood of $\overline W_j$, 
and it is homotopic to $f_{j-1}$ by a homotopy $f_{j,t}: M\to X$ $(t\in [0,1])$ such that each 
$f_{j,t}$ is a holomorphic Legendrian embedding on $\overline W_{j-1}$ satisfying
\[
	\sup_{p\in \overline W_{j-1}} \dist \bigl(f_{j,t}(p),f_{j-1}(p)\bigr) < \epsilon_{j-1}. 
\]
\item $h_j(M_j)=W_j$, and $h_j=h_{j-1}\circ g_j$ where $g_j: M\to M$ is a diffeomorphism which is 
diffeotopic to $\Id_M$ by a diffeotopy that is fixed on a neighborhood of $\overline M_{j-1}$.
\item We have $\epsilon_j < \frac12\min\{\epsilon_{j-1},\delta_j\}$, where $\delta_j>0$ is chosen such that 
any holomorphic map $g: W_{j}\to X$ satisfying 
$\sup_{p\in W_{j}} \bigl(f_j(p),g(p)\bigr) < \delta_j$ is an embedding on $W_{j-1}$.
\end{enumerate}
Granted such sequences, it is easily verified that there exists the limit map 
\[ 
	\tilde f=\lim_{j\to \infty} f_j : \Omega=\bigcup_{j=1}^\infty W_j \to X
\]
which is a holomorphic Legendrian embedding, and there exists the limit diffeomorphism 
$h=\lim_{j\to\infty} h_j : M\to \Omega$ onto $\Omega$. The composition $F=\tilde f\circ h:M\to X$ and the 
complex structure $J=h^*(J_0)$ on $M$ then satisfy Theorem \ref{th:soft}, i.e., 
$F$ is a $J$-holomorphic Legendrian embedding approximating $f$ on $K$ and $J$ agrees with the original
complex structure $J_0$ on a neighborhood of $K$ (since $h$ is the identity there).

The induction begins with $f_0=f$, $W_0=M_0$, $h_0=\Id_M$, and $\epsilon_0=\epsilon/2$.
We now explain the inductive step. Fix $j\in\N$. Assume that 
$\rho$ has a (unique) critical point $p_j \in M_j \setminus \overline M_{j-1}$.
If $p_j$ is a local minimum of $\rho$, we let $E_j=\{p_j\}$. Otherwise,
the Morse index of $p_j$ equals $1$ and the change of topology of the sublevel
set $\{\rho<c\}$ at $p_j$ is described by attaching to the compact domain $\overline M_{j-1}$
a smooth embedded arc $E_j\subset M_j$ intersecting $\overline M_{j-1}$ 
transversely at both endpoints and nowhere else. Finally, if $\rho$ has no critical point in
$M_j \setminus \overline M_{j-1}$, we let $E_j=\varnothing$. In all three cases, the 
compact set
\begin{equation}\label{eq:Gammaj}
	\Gamma_j:=\overline M_{j-1}\cup E_j \subset M_j
\end{equation}
has arbitrarily small smoothly bounded neighbourhoods $M'_j\Subset M_j$ 
which are diffeotopic to $M_j$ by a diffeotopy of $M$ that is fixed on a 
neighbourhood of $\Gamma_j$. 

Let $h_{j-1}:M\to M$ be the diffeomorphism from the
previous step, so we have that $W_{j-1}=h_{j-1}(M_{j-1})$. Set $E'_j=h_{j-1}(E_j)$. Then,
\[
	 S_j:= \overline W_{j-1}\cup E'_j = h_{j-1}(\Gamma_j) 
\] 
is an admissible subset of $M$ (see Definition \ref{def:admissible}). 
By the induction hypothesis, the map $f_{j-1}:M\to X$ is a Legendrian embedding on a neighbourhood of 
$\overline W_{j-1}$. Our goal is to find a homotopic deformation of $f_{j-1}$ to continuous
map $f_j:M\to X$ which is a holomorphic Legendrian embedding on a neighbourhood of $S_j$.

If $E'_j=\varnothing$, there is nothing to do. If $E'_j$ is a point, we let
$f_j$ agree with $f_{j-1}$ near $\overline W_{j-1}$ and let it be an arbitrary 
holomorphic Legendrian embedding on a small neighbourhood of $E'_j$. Thus, the only
nontrivial case is when $E'_j$ is a smooth arc attached transversely to $\overline W_{j-1}$
at its endpoints. In this case we first homotopically deform $f_{j-1}$, keeping it fixed near 
$\overline W_{j-1}$, so that $f_{j-1}:E'_j\to X$ becomes a smoothly immersed Legendrian curve.
This is possible by the Chow-Rashevski\u{\i} theorem; 
see Gromov \cite[1.1, p.\ 113 and 1.2.B, p.\ 120]{Gromov1996}.
This makes $f_{j-1}:S_j\to X$ a smooth Legendrian immersion which is holomorphic 
on a neighbourhood of $\overline W_{j-1}$. By Theorem \ref{th:Mergelyan} 
and the general position theorem (see \cite[Theorem 1.2]{AlarconForstneric2019IMRN})
we can approximate $f_{j-1}$ as closely as desired in the $\Cscr^2$ topology on $S_j$ 
by a holomorphic Legendrian embedding $f_j:U_j\to X$ from a neighborhood
of $S_j$ into $X$. After shrinking $U_j$ around $S_j$ there exists 
a homotopy $f_{j,t}: U_j\to X$ $(t\in [0,1])$ between $f_{j-1}|_{U_j}=f_{j,0}$ and $f_j$ 
consisting of maps which are Legendrian embeddings on a neighbourhood $V_j\subset U_j$ 
of $\overline W_{j-1}$ (see \cite[Remark 3.2]{AlarconForstneric2019IMRN}).
Assuming that the approximations were close enough we get condition (ii).

By what was said above, there is a smoothly bounded neighbourhood $W_j\Subset U_j$ of $S_j$ 
of the form $W_j=h_{j-1}(M'_j)$, where $M'_j\subset M_j$ is a neighbourhood of 
$\Gamma_j$ \eqref{eq:Gammaj} diffeotopic to $M_j$ by a diffeotopy 
which is fixed on a neighbourhood of $\Gamma_j$. Let $g_{j,t}:M\to M$ $(t\in [0,1])$ be such a diffeotopy
with $g_{j,0}=\Id_M$ and $g_{j,1}(M_j)=M'_j$. Then, 
\[	
	h_{j,t}:=h_{j-1}\circ g_{j,t}:M\to M,\quad\ t\in [0,1], 
\]
is a diffeotopy connecting $h_{j,0}=h_{j-1}$ and $h_{j,1}=h_j$ such that $h_j(M_j)=W_j$. This gives condition (iii). 
By using a cutoff function in the parameter of the homotopy we can extend $f_j$ and the homotopy $f_{j,t}$
(keeping it fixed on a neighbourhood of $S_j$)  to a continuous map $M\to X$ which agrees with $f_{j-1}$ on 
$M\setminus U_j$. Finally, we choose the next number $\epsilon_j>0$ sufficiently small so that
condition (iv) holds. This concludes the proof of Theorem \ref{th:soft}.


\subsection*{Acknowledgements}
Research on this paper was supported by research program P1-0291 and grant J1-9104
from ARRS, Republic of Slovenia, and the Stefan Bergman Prize 2019.

I wish to thank Roger Casals and Nicola Pia for their helpful explanation
and references concerning local stability and functional dimension of Pfaffian systems, described in the 
introduction just after Problem \ref{prob:realanalytic} (personal communications, November 2019). 
I also thank Antonio Alarc\'on for his remarks which helped me to improve the presentation,
and for having contributed Figure \ref{fig:bridges}.




\vspace*{5mm}
\noindent Franc Forstneri\v c

\noindent Faculty of Mathematics and Physics, University of Ljubljana, Jadranska 19, SI--1000 Ljubljana, Slovenia

\noindent Institute of Mathematics, Physics and Mechanics, Jadranska 19, SI--1000 Ljubljana, Slovenia

\noindent e-mail: {\tt franc.forstneric@fmf.uni-lj.si}

\end{document}